\author{Samuele Giraudo\addressmark{1}}
\title{Constructing combinatorial operads from monoids}
\address{\addressmark{1}Institut Gaspard Monge, Université Paris-Est
    Marne-la-Vallée, 5 Boulevard Descartes, Champs-sur-Marne,
    77454 Marne-la-Vallée cedex 2, France}
\keywords{Operad; Monoid; Generalized Dyck path; Tree; Directed animal;
Diassociative operad.}
\newtheorem{Theoreme}{Theorem}[section]
\newtheorem{Proposition}[Theoreme]{Proposition}
\newcommand{\EnsPermu}{\mathfrak{S}}
\newcommand{\EnsNat}{\mathbb{N}}
\newcommand{\K}{\mathbb{K}}
\newcommand{\CalP}{\mathcal{P}}
\newcommand{\CalQ}{\mathcal{Q}}
\newcommand{\T}{{\sf T}}
\newcommand{\Unite}{{\bf 1}}
\newcommand{\End}{{\it End}}
\newcommand{\FP}{{\it PF}}
\newcommand{\MT}{{\it PW}}
\newcommand{\Per}{{\it Per}}
\newcommand{\APE}{{\it PRT}}
\newcommand{\FCat}[1]{{\it FCat^{(#1)}}}
\newcommand{\Schr}{{\it Schr}}
\newcommand{\Motz}{{\it Motz}}
\newcommand{\Comp}{{\it Comp}}
\newcommand{\AnD}{{\it DA}}
\newcommand{\SComp}{{\it SComp}}
\newcommand{\DD}{{\it D}}
\newcommand{\Dias}{{\it Dias}}
\newcommand{\NAP}{{\it NAP}}
\newcommand{\La}{{\tt a}}
\newcommand{\Lb}{{\tt b}}
\newcommand{\Lc}{{\tt c}}
\newcommand{\FCatOpA}{\scalebox{.12}{
\begin{tikzpicture}
    \node[NoeudDyck](1)at(0,0){};
    \node[NoeudDyck](2)at(1,1){};
    \node[NoeudDyck](3)at(2,2){};
    \node[NoeudDyck](4)at(3,1){};
    \node[NoeudDyck](5)at(4,0){};
    \draw[PasDyck](1)--(2);
    \draw[PasDyck](2)--(3);
    \draw[PasDyck](3)--(4);
    \draw[PasDyck](4)--(5);
\end{tikzpicture}}\,}
\newcommand{\FCatOpB}{\scalebox{.12}{
\begin{tikzpicture}
    \node[NoeudDyck](1)at(0,0){};
    \node[NoeudDyck](2)at(1,1){};
    \node[NoeudDyck](3)at(2,0){};
    \node[NoeudDyck](4)at(3,1){};
    \node[NoeudDyck](5)at(4,0){};
    \draw[PasDyck](1)--(2);
    \draw[PasDyck](2)--(3);
    \draw[PasDyck](3)--(4);
    \draw[PasDyck](4)--(5);
\end{tikzpicture}}\,}
\newcommand{\SchrOpA}{\scalebox{.18}{
\begin{tikzpicture}
    \node[NoeudSr](1)at(0,0){};
    \node[Feuille](2)at(-1,-1){};
    \node[Feuille](3)at(0,-1){};
    \node[Feuille](4)at(1,-1){};
    \draw[Arete](1)--(2);
    \draw[Arete](1)--(3);
    \draw[Arete](1)--(4);
\end{tikzpicture}}\,}
\newcommand{\SchrOpB}{\scalebox{.18}{
\begin{tikzpicture}
    \node[NoeudSr](1)at(0,0){};
    \node[Feuille](2)at(-1,-1){};
    \node[NoeudSr](3)at(.5,-.5){};
    \node[Feuille](4)at(0,-1){};
    \node[Feuille](5)at(1,-1){};
    \draw[Arete](1)--(2);
    \draw[Arete](1)--(3);
    \draw[Arete](3)--(4);
    \draw[Arete](3)--(5);
\end{tikzpicture}}\,}
\newcommand{\SchrOpC}{\scalebox{.18}{
\begin{tikzpicture}
    \node[NoeudSr](1)at(0,0){};
    \node[NoeudSr](2)at(-.5,-.5){};
    \node[Feuille](3)at(-1,-1){};
    \node[Feuille](4)at(0,-1){};
    \node[Feuille](5)at(1,-1){};
    \draw[Arete](1)--(2);
    \draw[Arete](2)--(3);
    \draw[Arete](2)--(4);
    \draw[Arete](1)--(5);
\end{tikzpicture}}\,}
\newcommand{\MotzOpA}{\scalebox{.18}{
\begin{tikzpicture}
    \node[NoeudDyck](1)at(0,0){};
    \node[NoeudDyck](2)at(1,0){};
    \draw[PasDyck](1)--(2);
\end{tikzpicture}}\,}
\newcommand{\MotzOpB}{\scalebox{.18}{
\begin{tikzpicture}
    \node[NoeudDyck](1)at(0,0){};
    \node[NoeudDyck](2)at(1,1){};
    \node[NoeudDyck](3)at(2,0){};
    \draw[PasDyck](1)--(2);
    \draw[PasDyck](2)--(3);
\end{tikzpicture}}\,}
\newcommand{\CompOpA}{\scalebox{.15}{
\begin{tikzpicture}
    \node[Boite]at(0,0){};
    \node[Boite]at(1,0){};
\end{tikzpicture}}\,}
\newcommand{\CompOpB}{\scalebox{.15}{
\begin{tikzpicture}
    \node[Boite2]at(0,0){};
    \node[Boite2]at(0,-1){};
\end{tikzpicture}}\,}
\definecolor{Noir}{RGB}{0,0,0}
\definecolor{Rouge}{RGB}{205,35,38}
\definecolor{Bleu}{RGB}{2,60,195}
\definecolor{Vert}{RGB}{23,163,1}
\definecolor{Violet}{RGB}{181,18,225}
\definecolor{Orange}{RGB}{255,113,15}
\tikzstyle{Noeud} = [circle, draw = Bleu!100, fill = Bleu!25, thick, inner sep = 0pt, minimum size = 10mm, font = \Huge]
\tikzstyle{NoeudSr} = [Noeud, minimum size = 4mm]
\tikzstyle{Arete} = [Rouge!80, thick, draw, line width = 2pt]
\tikzstyle{Feuille} = [rectangle, draw = Noir!100, fill = Noir!30, thick, inner sep = 0pt, minimum size = 3mm]
\tikzstyle{Marque1} = [draw = Vert!100, fill = Vert!80]
\tikzstyle{Marque2} = [draw = Orange!100, fill = Orange!60]
\tikzstyle{NoeudDyck} = [circle, draw = Violet!90, fill = Bleu!60, thick, inner sep = 0pt, minimum size = 5mm]
\tikzstyle{PasDyck} = [color=Violet!60,line width=3pt]
\tikzstyle{Grille} = [color=Noir!30]
\tikzstyle{Boite} = [rectangle, draw = Noir!100, fill = Vert!40, thick, inner sep = 0pt, minimum size = 10mm]
\tikzstyle{Boite2} = [rectangle, draw = Noir!100, fill = Orange!10, thick, inner sep = 0pt, minimum size = 10mm]
\tikzstyle{Boite3} = [rectangle, draw = Noir!100, fill = Rouge!10, thick, inner sep = 0pt, minimum size = 10mm]
\tikzstyle{Injection} = [Noir!100,draw,>->]
\tikzstyle{Surjection} = [Noir!100,draw,->>]
\begin{document}

\maketitle

\begin{abstract}
    \paragraph{Abstract.}
    We introduce a functorial construction which, from a monoid, produces
    a set-operad. We obtain new (symmetric or not) operads as suboperads
    or quotients of the operad obtained from the additive monoid. These
    involve various familiar combinatorial objects: parking functions,
    packed words, planar rooted trees, generalized Dyck paths, Schröder trees,
    Motzkin paths, integer compositions, directed animals,~\emph{etc.} We also
    retrieve some known operads: the magmatic operad, the commutative associative
    operad, and the diassociative operad.

    \paragraph{Résumé.}
    Nous introduisons une construction fonctorielle qui, à partir d'un monoïde,
    produit une opérade ensembliste. Nous obtenons de nouvelles opérades
    (symétriques ou non) comme sous-opérades ou quotients de l'opérade
    obtenue à partir du monoïde additif. Celles-ci mettent en jeu divers
    objets combinatoires familiers~: fonctions de parking, mots tassés,
    arbres plans enracinés, chemins de Dyck généralisés, arbres de Schröder,
    chemins de Motzkin, compositions d'entiers, animaux dirigés,~\emph{etc.}
    Nous retrouvons également des opérades déjà connues~: l'opérade magmatique,
    l'opérade commutative associative et l'opérade diassociative.
\end{abstract}

\section{Introduction} \label{sec:Introduction}

Operads are algebraic structures introduced in the 1970s by Boardman and
Vogt~\cite{BV73} and by May~\cite{May72} in the context of algebraic topology.
Informally, an operad is a structure containing operators with~$n$ inputs
and~$1$ output, for all positive integer~$n$. Two operators~$x$ and~$y$
can be composed at $i$th position by grafting the output of~$y$ on the $i$th
input of~$x$. The new operator thus obtained is denoted by~$x \circ_i y$.
In an operad, one can also switch the inputs of an operator~$x$ by letting
a permutation~$\sigma$ act to obtain a new operator denoted by~$x \cdot \sigma$.
One of the main relishes of operads comes from the fact that they offer a
general theory to study in an unifying way different types of algebras,
such as associative algebras and Lie algebras.
\smallskip

In recent years, the importance of operads in combinatorics has continued
to increase and several new operads were defined on combinatorial objects
(see \emph{e.g.},~\cite{Lod01,CL01,Liv06,Cha08}). The structure thereby
added on combinatorial families enables to see these in a new light and
offers original ways to solve some combinatorial problems. For example,
the dendriform operad~\cite{Lod01} is an operad on binary trees and plays
an interesting role for the understanding of the Hopf algebra of Loday-Ronco
of binary trees~\cite{LR98,HNT05}. Besides, this operad is a key ingredient
for the enumeration of intervals of the Tamari lattice~\cite{Cha06,Cha08}.
There is also a very rich link connecting  combinatorial Hopf algebra theory
and operad theory: various constructions produce combinatorial Hopf algebras
from operads~\cite{CL07,LV10}.
\smallskip

In this paper, we propose a new generic method to build combinatorial operads.
The starting point is to pick a monoid~$M$. We then consider the set of
words whose letters are elements of~$M$. The arity of such words are their
length, the composition of two words is expressed from the product of~$M$,
and permutations act on words by permuting letters. In this way, we associate
to any monoid~$M$ an operad denoted by~$\T M$. This construction is rich
from a combinatorial point of view since it allows, by considering suboperads
and quotients of~$\T M$, to get new operads on various combinatorial objects.
\smallskip

This paper is organized as follows. In Section~\ref{sec:Preliminaires},
we recall briefly the basics about set-operads. Section~\ref{sec:Foncteur}
is devoted to the definition of the construction associating an operad
to a monoid and to establish its first properties. We show that this
construction is a functor from the category of monoids to the category of
operads that respects injections and surjections. Finally we apply this construction
in Section~\ref{sec:Exemples} on various monoids and obtain several new
combinatorial (symmetric or not) operads on the following combinatorial
objects: endofunctions, parking functions, packed words, permutations,
planar rooted trees, generalized Dyck paths, Schröder trees, Motzkin paths,
integer compositions, directed animals, and segmented integer compositions.
We conclude by building an operad isomorphic to the diassociative operad~\cite{Lod01}.

\acknowledgements
The author would like to thank Florent Hivert and Jean-Christophe Novelli
for their advice during the preparation of this paper.
This work is based on computer exploration and the author used, for this
purpose, the open-source mathematical software Sage~\cite{Sage} and one
of its extensions, Sage-Combinat~\cite{SageC}.

\section{Preliminaries and notations} \label{sec:Preliminaires}

\subsection{Permutations}

Let us denote by~$[n]$ the set $\{1, \dots, n\}$ and by~$\EnsPermu_n$ the
set of permutations of~$[n]$. Let $\sigma \in \EnsPermu_n$, $\nu \in \EnsPermu_m$,
and $i \in [n]$.  The \emph{substitution} of~$\nu$ into~$\sigma$ is the
permutation $B_i(\sigma, \nu) := \sigma'_1 \dots \sigma'_{i - 1} \nu''_1 \dots \nu''_m \sigma'_{i + 1} \dots \sigma'_n$
where $\sigma'_j := \sigma_j$ if $\sigma_j < \sigma_i$ and $\sigma'_j := \sigma_j + m - 1$
otherwise, and $\nu''_j := \nu_j + \sigma_i - 1$. For instance, one has
$B_4(\textcolor{Bleu}{741}{\bf 5}\textcolor{Bleu}{623}, \textcolor{Rouge}{231})
= \textcolor{Bleu}{941}\textcolor{Rouge}{675}\textcolor{Bleu}{823}$.

\subsection{Operads}

Recall that a \emph{set-operad}, or an \emph{operad} for short, is a set
$\CalP := \biguplus_{n \geq 1} \CalP(n)$ together with \emph{substitution maps}
\begin{equation}
    \circ_i : \CalP(n) \times \CalP(m) \to \CalP(n + m - 1),
    \qquad n, m \geq 1, i \in [n],
\end{equation}
a distinguished element $\Unite \in \CalP(1)$, the \emph{unit} of $\CalP$,
and a \emph{symmetric group action}
\begin{equation}
    \cdot : \CalP(n) \times \EnsPermu_n \to \CalP(n), \qquad n \geq 1.
\end{equation}
The above data has to satisfy the following relations:
\begin{align}
    (x \circ_i y) \circ_{i + j - 1} z = x \circ_i (y \circ_j z),
    \qquad & x \in \CalP(n), y \in \CalP(m), z \in \CalP(k), i \in [n], j \in [m],
    \label{eq:AssocSerie} \\
    (x \circ_i y) \circ_{j + m - 1} z = (x \circ_j z) \circ_i y,
    \qquad & x \in \CalP(n), y \in \CalP(m), z \in \CalP(k), 1 \leq i < j \leq n,
    \label{eq:AssocParallele}\\
    \Unite \circ_1 x = x = x \circ_i \Unite,
    \qquad & x \in \CalP(n), i \in [n], \label{eq:Unite} \\
    (x \cdot \sigma) \circ_i (y \cdot \nu) = \left(x \circ_{\sigma_i} y\right)
    \cdot B_i(\sigma, \nu),
    \qquad & x \in \CalP(n), y \in \CalP(m), \sigma \in \EnsPermu_n, \nu \in \EnsPermu_m,
    i \in [n]. \label{eq:Equivariance}
\end{align}

The \emph{arity} of an element~$x$ of~$\CalP(n)$ is~$n$. Let~$\CalQ$ be
an operad. A map~$\phi : \CalP \to \CalQ$ is an \emph{operad morphism} if
it commutes with substitution maps and symmetric group action and maps
elements of arity~$n$ of~$\CalP$ to elements of arity~$n$ of~$\CalQ$.
A \emph{non-symmetric operad} is an operad without symmetric group action.
The above definitions also work when~$\CalP$ is a~$\EnsNat$-graded vector
space. In this case, the substitution maps~$\circ_i$ are linear maps, and
the symmetric group action is linear on the left.

\section{A combinatorial functor from monoids to operads} \label{sec:Foncteur}

\subsection{The construction}

\subsubsection{Monoids to operads}
Let $(M, \bullet, 1)$ be a monoid. Let us denote by~$\T M$ the set
$\T M := \biguplus_{n \geq 1} \T M(n)$, where for all~$n \geq 1$,
\begin{equation}
    \T M(n) := \left\{(x_1, \dots, x_n) : x_i \in M \mbox{ for all $i \in [n]$}\right\}.
\end{equation}
We endow the set~$\T M$ with maps
\begin{equation} \label{eq:TDomaineSubs}
    \circ_i : \T M(n) \times \T M(m) \to \T M(n + m - 1),
    \qquad n, m \geq 1, i \in [n],
\end{equation}
defined as follows: For all $x \in \T M(n)$, $y \in \T M(m)$, and $i \in [n]$,
we set
\begin{equation} \label{eq:TSub}
    x \circ_i y :=
    (x_1, \; \dots, \; x_{i-1},
    \; x_i \bullet y_1, \; \dots, \; x_i \bullet y_m,
    \; x_{i+1}, \; \dots, \; x_n).
\end{equation}
Let us also set $\Unite := (1)$ as a distinguished element of~$\T M(1)$.
We endow finally each set~$\T M(n)$ with a right action of the symmetric group
\begin{equation}
    \cdot : \T M(n) \times \EnsPermu_n \to \T M(n), \qquad n \geq 1,
\end{equation}
defined as follows: For all $x \in \T M(n)$ and $\sigma \in \EnsPermu_n$,
we set
\begin{equation}
    x \cdot \sigma := \left(x_{\sigma_1}, \dots, x_{\sigma_n}\right).
\end{equation}

The elements of~$\T M$ are words over~$M$ regarded as an alphabet. The
arity of an element~$x$ of~$\T M(n)$, denoted by~$|x|$, is~$n$. For the
sake of readability, we shall denote in some cases an element $(x_1, \dots, x_n)$
of~$\T M(n)$ by its word notation $x_1 \dots x_n$.

\begin{Proposition} \label{prop:TOperade}
    If~$M$ is a monoid, then~$\T M$ is a set-operad.
\end{Proposition}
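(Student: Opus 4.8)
The plan is to verify directly that the data $(\T M, \circ_i, \Unite, \cdot)$ satisfies the four operad axioms~\eqref{eq:AssocSerie}--\eqref{eq:Equivariance}, each of which reduces to an elementary property of the monoid $(M, \bullet, 1)$. The strategy throughout is purely computational: since $x \circ_i y$ only modifies the single letter in position $i$, replacing $x_i$ by the $m$ letters $x_i \bullet y_1, \dots, x_i \bullet y_m$ and leaving the others untouched, every identity can be checked by tracking which letter occupies which position on both sides and comparing them one at a time.

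First I would establish series associativity~\eqref{eq:AssocSerie}. In $x \circ_i y$ the letters in positions $i, \dots, i+m-1$ are $x_i \bullet y_1, \dots, x_i \bullet y_m$; composing with $z$ at position $i+j-1$ then targets the letter $x_i \bullet y_j$ and replaces it by the letters $(x_i \bullet y_j) \bullet z_\ell$ for $\ell \in [k]$. On the other side, $x \circ_i (y \circ_j z)$ replaces $x_i$ by the letters $x_i \bullet (y \circ_j z)_p$, among which those coming from $z$ read $x_i \bullet (y_j \bullet z_\ell)$. The two words agree letter by letter precisely because $\bullet$ is associative. Next, for parallel associativity~\eqref{eq:AssocParallele} with $i < j$, the two compositions act on the distinct letters $x_i$ and $x_j$, which stay disjoint after either grafting; a direct comparison shows that both sides equal the word in which $x_i$ is expanded by $y$ and $x_j$ is expanded by $z$, the index shift $+m-1$ being exactly what is needed to address $x_j$ once $x_i$ has already been expanded into $m$ letters. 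The unit axioms~\eqref{eq:Unite} then follow immediately: $\Unite \circ_1 x$ expands the single letter $1$ into $1 \bullet x_1, \dots, 1 \bullet x_m$, while $x \circ_i \Unite$ replaces $x_i$ by $x_i \bullet 1$, so both reduce to $x$ by the left and right unit laws of $M$.

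The last and most delicate step is equivariance~\eqref{eq:Equivariance}, the only axiom mixing the symmetric group action with the substitution. Unfolding the left-hand side, $(x \cdot \sigma) \circ_i (y \cdot \nu)$ is the word whose $i$-th letter $x_{\sigma_i}$ has been expanded into $x_{\sigma_i} \bullet y_{\nu_1}, \dots, x_{\sigma_i} \bullet y_{\nu_m}$, the remaining letters being the $x_{\sigma_p}$ for $p \neq i$. The right-hand side first forms $x \circ_{\sigma_i} y$, expanding the $\sigma_i$-th letter of $x$ into $x_{\sigma_i} \bullet y_1, \dots, x_{\sigma_i} \bullet y_m$, and then reorders this word of length $n+m-1$ by $B_i(\sigma, \nu)$. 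I expect the main obstacle to lie exactly here, in the careful index bookkeeping needed to show that $B_i(\sigma, \nu)$ carries each position of $x \circ_{\sigma_i} y$ to the position prescribed by the left-hand side. The proof amounts to checking, position by position, that the definition of $B_i(\sigma, \nu)$ -- with $\sigma'_j = \sigma_j$ or $\sigma_j + m - 1$ according to whether $\sigma_j < \sigma_i$, and $\nu''_j = \nu_j + \sigma_i - 1$ -- is precisely tailored so that the block of $m$ consecutive letters coming from $y$ is permuted internally by $\nu$, while the surrounding letters inherit the order induced by $\sigma$ once room has been made for that block. Once this index matching is confirmed the two words coincide, all four axioms hold, and $\T M$ is a set-operad.
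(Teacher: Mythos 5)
Your proposal is correct and follows essentially the same route as the paper: a direct letter-by-letter verification of axioms~(\ref{eq:AssocSerie})--(\ref{eq:Equivariance}), with series associativity reducing to associativity of~$\bullet$, the unit axiom to the unit laws of~$M$, and parallel associativity to the disjointness of the two expanded letters. In fact your sketch of the equivariance check (the block of~$m$ letters permuted internally by~$\nu$ after a shift by~$\sigma_i - 1$, the outer letters following~$\sigma$ shifted by~$m-1$ when above~$\sigma_i$) is more explicit than the paper's own treatment, which dispatches~(\ref{eq:Equivariance}) in a single sentence by observing that the symmetric group acts by permuting letters.
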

\begin{proof}
    Let us respectively denote by~$\bullet$ and~$1$ the product and the
    unit of~$M$. First of all, thanks to~(\ref{eq:TDomaineSubs})
    and~(\ref{eq:TSub}), the maps~$\circ_i$ are well-defined and are substitution
    maps of operads. Let us now show that~$\T M$ satisfies~(\ref{eq:AssocSerie}),
    (\ref{eq:AssocParallele}), (\ref{eq:Unite}), and~(\ref{eq:Equivariance}).

    Let $x \in \T M(n)$, $y \in \T M(m)$, $z \in \T M(k)$, $i \in [n]$,
    and $j \in [m]$. We have, using associativity of~$\bullet$, \vspace{-.8em}
    \begin{equation} \begin{split}
        (x \circ_i y) \circ_{i + j - 1} z
        & = (x_1, \; \dots, \; x_{i-1},
            \; x_i \bullet y_1, \; \dots, \; x_i \bullet y_m,
            \; x_{i+1}, \; \dots, \; x_n) \circ_{i + j - 1} z \\
        & = (x_1, \;\dots, \; x_{i-1},
            \; x_i \bullet y_1, \; \dots, \; x_i \bullet y_{j-1},
            \; (x_i \bullet y_j) \bullet z_1, \; \dots, \; (x_i \bullet y_j) \bullet z_k, \\
        & \qquad  x_i \bullet y_{j+1}, \; \dots, \;
            x_i \bullet y_m, \; x_{i+1}, \; \dots, \; x_n) \\
        & = (x_1, \; \dots, \; x_{i-1},
            \; x_i \bullet y_1, \; \dots, \; x_i \bullet y_{j-1},
            \; x_i \bullet (y_j \bullet z_1), \; \dots, \; x_i \bullet (y_j \bullet z_k), \\
        & \qquad x_i \bullet y_{j+1}, \; \dots, \;
            x_i \bullet y_m, \; x_{i+1}, \; \dots, \; x_n) \\
        & = x \circ_i (y_1, \; \dots, \; y_{j-1}, \; y_j \bullet z_1,
            \; \dots, \; y_j \bullet z_k, \; y_{j+1}, \; \dots, \; y_m) \\
        & = x \circ_i (y \circ_j z),
    \end{split} \end{equation}
    showing that~$\circ_i$ satisfies~(\ref{eq:AssocSerie}).

    Let $x \in \T M(n)$, $y \in \T M(m)$, $z \in \T M(k)$,
    and $i < j \in [n]$. We have, \vspace{-.8em}
    \begin{equation} \begin{split}
        (x \circ_j z) \circ_i y
        & = (x_1, \; \dots, \; x_{j-1}, \; x_j \bullet z_1, \; \dots,
            \; x_j \bullet z_k, \; x_{j+1}, \; \dots, \; x_n) \circ_i y, \\
        & = (x_1, \; \dots, \; x_{i-1}, \; x_i \bullet y_1, \; \dots,
            \; x_i \bullet y_m, \; x_{i+1}, \; \dots, \; x_{j-1}, \\
        & \qquad  x_j \bullet z_1, \; \dots, \; x_j \bullet z_k, \; x_{j+1}, \; \dots, \; x_n) \\
        & = (x_1, \; \dots, \; x_{i-1}, \; x_i \bullet y_1, \; \dots,
            \; x_i \bullet y_m, \; x_{i+1}, \; \dots, \; x_n) \circ_{j + m - 1} z \\
        & = (x \circ_i y) \circ_{j + m - 1} z,
    \end{split} \end{equation}
    showing that~$\circ_i$ satisfies~(\ref{eq:AssocParallele}).

    The element~$\Unite$ is the unit of~$\T M$. Indeed, we have
    $\Unite \in \T M(1)$, and, for all $x \in \T M(n)$ and~$i \in [n]$,
    \begin{equation}
        x \circ_i \Unite \enspace = \enspace
        (x_1, \; \dots, \; x_{i-1}, \; x_i \bullet 1, \; x_{i+1}, \; \dots, \; x_n)
        \enspace = \enspace x,
    \end{equation}
    since~$1$ is the unit for~$\bullet$, and
    \begin{equation}
        \Unite \circ_1 x \enspace = \enspace
        (1 \bullet x_1, \; \dots, \; 1 \bullet x_n) \enspace = \enspace
        x,
    \end{equation}
    for the same reason. That shows~(\ref{eq:Unite}).

    Finally, since the symmetric group~$\EnsPermu_n$ acts by permuting the
    letters of a word $(x_1, \dots, x_n)$ of~$\T M(n)$, the maps~$\circ_i$
    and the action~$\cdot$ satisfy together~(\ref{eq:Equivariance}).
\end{proof}

\subsubsection{Monoids morphisms to operads morphisms}
Let~$M$ and~$N$ be two monoids and $\theta : M \to N$ be a monoid morphism.
Let us denote by~$\T \theta$ the map
\begin{equation}
    \T \theta : \T M \to \T N,
\end{equation}
defined for all $(x_1, \dots, x_n) \in \T M(n)$ by
\begin{equation}
    \T \theta\left(x_1, \dots, x_n\right) :=
    \left(\theta(x_1), \dots, \theta(x_n)\right).
\end{equation}

\begin{Proposition} \label{prop:TOperadeMorph}
    If~$M$ and~$N$ are two monoids and $\theta : M \to N$ is a monoid
    morphism, then the map $\T \theta : \T M \to \T N$ is an operad morphism.
\end{Proposition}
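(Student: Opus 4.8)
**

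The goal is to show $\T\theta$ respects all the structure of an operad morphism.The plan is to verify directly the three conditions in the definition of an operad morphism given above: preservation of arity, commutation with the substitution maps $\circ_i$, and commutation with the symmetric group action. Since $\T\theta$ acts letter by letter, arity preservation is immediate, because $\T\theta(x_1, \dots, x_n) = (\theta(x_1), \dots, \theta(x_n))$ has exactly $n$ letters and hence lies in $\T N(n)$.

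The heart of the argument is commutation with $\circ_i$, and this is the single place where the hypothesis that $\theta$ is a monoid morphism is actually used. Given $x \in \T M(n)$, $y \in \T M(m)$, and $i \in [n]$, I would expand $x \circ_i y$ using~(\ref{eq:TSub}), apply $\T\theta$ letterwise, and then rewrite each new letter $\theta(x_i \bullet y_j)$ as $\theta(x_i) \bullet \theta(y_j)$ by the morphism property of $\theta$. The word so obtained is precisely $\T\theta(x) \circ_i \T\theta(y)$ as computed by the substitution map of $\T N$, whence $\T\theta(x \circ_i y) = \T\theta(x) \circ_i \T\theta(y)$.

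Commutation with the symmetric group action is purely formal: for $\sigma \in \EnsPermu_n$, both $\T\theta(x \cdot \sigma)$ and $\T\theta(x) \cdot \sigma$ equal $(\theta(x_{\sigma_1}), \dots, \theta(x_{\sigma_n}))$, since permuting letters and then applying $\theta$ letterwise yields the same word as applying $\theta$ letterwise and then permuting. One may also record that $\T\theta$ sends the unit $\Unite = (1_M)$ to $(\theta(1_M)) = (1_N) = \Unite$, because $\theta$ preserves units, although this is not required by the stated definition of operad morphism.

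There is no real obstacle here: the statement reduces to three routine checks, and the only nontrivial ingredient is invoking $\theta(a \bullet b) = \theta(a) \bullet \theta(b)$ at the right moment in the computation of $\circ_i$. The main thing to be careful about is bookkeeping, namely tracking the index ranges $1, \dots, i-1$, the inserted block $x_i \bullet y_1, \dots, x_i \bullet y_m$, and $i+1, \dots, n$, and keeping in mind that the product $\bullet$ of $M$ and that of $N$, both written the same way, are distinct maps interchanged by $\theta$.
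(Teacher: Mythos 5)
Your proof is correct and follows essentially the same route as the paper's: a direct letterwise verification in which the monoid morphism property $\theta(x_i \bullet y_j) = \theta(x_i) \bullet \theta(y_j)$ is invoked exactly once, inside the expansion of $x \circ_i y$ via~(\ref{eq:TSub}), with the symmetric group compatibility and arity preservation being formal. The only (harmless) difference is one of bookkeeping: the paper also verifies unit preservation $\T \theta\left(1_M\right) = \left(1_N\right)$ as one of its required properties, which you correctly note holds but is not demanded by the paper's stated definition of operad morphism.
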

\begin{proof}
    Let us respectively denote by~$\bullet_M$ (resp.~$\bullet_N$) and~$1_M$
    (resp.~$1_N$) the product and the unit of~$M$ (resp.~$N$).

    Let $x \in \T M(n)$, $y \in \T M(m)$, and $i \in [n]$.
    Since $\theta$ is a monoid morphism, we have \vspace{-.8em}
    \begin{equation} \begin{split}
        \T \theta (x \circ_i y)
        & = \T \theta\left(x_1, \; \dots, \; x_{i-1}, \; x_i \bullet_M y_1,
            \; \dots, \; x_i \bullet_M y_m, \; x_{i+1}, \; \dots, \; x_n\right) \\
        & = (\theta(x_1), \; \dots, \; \theta(x_{i-1}), \; \theta(x_i \bullet_M y_1),
            \; \dots, \; \theta(x_i \bullet_M y_m), \;
        \theta(x_{i+1}), \; \dots, \; \theta(x_n)) \\
        & = (\theta(x_1), \; \dots, \theta(x_{i-1}), \; \theta(x_i) \bullet_N \theta(y_1),
            \; \dots, \; \theta(x_i) \bullet_N \theta(y_m), \;
            \theta(x_{i+1}), \; \dots, \; \theta(x_n)) \\
        & = \left(\theta(x_1), \; \dots, \; \theta(x_n)) \circ_i (\theta(y_1),
            \; \dots, \; \theta(y_m)\right) \\
        & = \T \theta(x) \circ_i \T \theta(y).
    \end{split} \end{equation}

    Moreover, since~$(1_M)$ is by definition the unit of~$\T M$, we have
    \begin{equation}
        \T \theta\left(1_M\right) \enspace = \enspace
        \left(\theta\left(1_M\right)\right) \enspace = \enspace
        \left(1_N\right).
    \end{equation}

    Finally, since the symmetric group~$\EnsPermu_n$ acts by permuting
    letters, we have for all $x \in \T M(n)$ and $\sigma \in \EnsPermu_n$,
    $\T \theta(x \cdot \sigma) = \T \theta(x) \cdot \sigma$.

    The map~$\T \theta$ satisfies the three required properties
    and hence, since by Proposition~\ref{prop:TOperade}, $\T M$ and $\T N$
    are operads,~$\T \theta$ is an operad morphism.
\end{proof}

\subsection{Properties of the construction}

\begin{Proposition} \label{prop:TInjSur}
    Let~$M$ and~$N$ be two monoids and $\theta : M \to N$ be a monoid morphism.
    If~$\theta$ is injective (resp. surjective), then~$\T \theta$ is
    injective (resp. surjective).
\end{Proposition}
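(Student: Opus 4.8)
The plan is to exploit the fact that $\T\theta$ operates letter by letter and, in particular, preserves arity: by its definition it sends $\T M(n)$ into $\T N(n)$ for every $n \geq 1$. Consequently both statements reduce to the corresponding properties of $\theta$ acting on individual letters, and it suffices to work arity by arity.

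First I would treat injectivity. Assuming $\theta$ is injective, take $x = (x_1, \dots, x_n)$ and $x' = (x'_1, \dots, x'_{n'})$ in $\T M$ with $\T\theta(x) = \T\theta(x')$. Since $\T\theta$ preserves arity, this equality forces $n = n'$, after which comparing the two words componentwise gives $\theta(x_i) = \theta(x'_i)$ for each $i \in [n]$. Injectivity of $\theta$ then yields $x_i = x'_i$ for all $i$, hence $x = x'$.

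Next I would treat surjectivity. Assuming $\theta$ is surjective, fix an arbitrary $y = (y_1, \dots, y_n) \in \T N(n)$. For each $i \in [n]$, surjectivity of $\theta$ provides some $x_i \in M$ with $\theta(x_i) = y_i$; assembling these letters produces a word $x = (x_1, \dots, x_n) \in \T M(n)$ satisfying $\T\theta(x) = y$. As $y$ was arbitrary, $\T\theta$ is surjective.

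I do not anticipate a genuine obstacle here: once the arity-preservation of $\T\theta$ is recorded, everything follows from the definition of $\T\theta$ as a componentwise application of $\theta$ together with the hypothesis on $\theta$. The only point requiring a little care is the injectivity argument, where one must first invoke arity preservation in order to be allowed to compare the two words letter by letter, rather than presupposing that the two preimages already have the same length.
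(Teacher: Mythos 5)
Your proof is correct and follows essentially the same letter-by-letter argument as the paper's: injectivity by comparing components and invoking injectivity of $\theta$, surjectivity by lifting each letter of a word in $\T N$ to a preimage under $\theta$. Your explicit remark that $\T\theta$ preserves arity (so the two words in the injectivity argument have the same length) is a small point of care that the paper leaves implicit, but it does not change the route.
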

\begin{proof}
    Assume that~$\theta$ is injective and that there are two elements~$x$
    and~$y$ of~$\T M$ such that $\T \theta(x) = \T \theta(y)$. Then,
    \begin{equation}
        \T \theta(x) \enspace = \enspace
        (\theta(x_1), \; \dots, \; \theta(x_n)) \enspace = \enspace
        (\theta(y_1), \; \dots, \; \theta(y_n)) = \T \theta(y),
    \end{equation}
    implying $\theta(x_i) = \theta(y_i)$ for all $i \in [n]$. Since~$\theta$
    is injective, we have $x_i = y_i$ for all $i \in [n]$ and thus,~$x = y$.
    Hence, since~$\T \theta$ is, by Proposition~\ref{prop:TOperadeMorph},
    an operad morphism, it also is an injective operad morphism.

    Assume that~$\theta$ is surjective and let~$y$ be an element of~$\T N(n)$.
    Since~$\theta$ is surjective, there are some elements~$x_i$ of~$M$
    such that $\theta(x_i) = y_i$
    for all $i \in [n]$. We have
    \begin{equation}
        \T \theta(x_1, \; \dots, \; x_n) \enspace = \enspace
        (\theta(x_1), \; \dots, \; \theta(x_n)) \enspace = \enspace
        (y_1, \; \dots, \; y_n).
    \end{equation}
    Hence, since $(x_1, \dots, x_n)$ is by definition an element
    of~$\T M(n)$, and since~$\T \theta$ is, by Proposition~\ref{prop:TOperadeMorph},
    an operad morphism, it also is a surjective operad morphism.
\end{proof}

\begin{Theoreme} \label{thm:TFonct}
    The construction~$\T$ is a functor from the category of monoids with
    monoid morphisms to the category of set-operads with operad morphisms.
    Moreover, $\T$ respects injections and surjections.
\end{Theoreme}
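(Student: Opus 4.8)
The plan is to verify the three defining properties of a functor and then invoke the propositions already established. First I would check that $\T$ is well-defined on objects: by Proposition~\ref{prop:TOperade}, to every monoid $M$ the construction assigns a genuine set-operad $\T M$, so objects are sent to objects of the target category. Next I would check that $\T$ is well-defined on morphisms: Proposition~\ref{prop:TOperadeMorph} shows that for any monoid morphism $\theta : M \to N$, the induced map $\T \theta : \T M \to \T N$ is an operad morphism, so arrows are sent to arrows of the correct type.

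The heart of the functoriality claim is the verification of the two structural axioms. First, $\T$ must preserve identities: if $\mathrm{id}_M : M \to M$ is the identity monoid morphism, then for any $(x_1, \dots, x_n) \in \T M(n)$ one computes $\T(\mathrm{id}_M)(x_1, \dots, x_n) = (\mathrm{id}_M(x_1), \dots, \mathrm{id}_M(x_n)) = (x_1, \dots, x_n)$, so $\T(\mathrm{id}_M) = \mathrm{id}_{\T M}$. Second, $\T$ must preserve composition: given monoid morphisms $\theta : M \to N$ and $\psi : N \to P$, I would show $\T(\psi \circ \theta) = \T\psi \circ \T\theta$ by applying both sides to an arbitrary $(x_1, \dots, x_n)$ and noting that each coordinate gives $(\psi \circ \theta)(x_i) = \psi(\theta(x_i))$, which is exactly the $i$th coordinate of $\T\psi(\T\theta(x_1, \dots, x_n))$. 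These are short coordinatewise checks following directly from the definition of $\T \theta$.

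Finally, the ``respects injections and surjections'' clause is precisely the content of Proposition~\ref{prop:TInjSur}: an injective (resp. surjective) monoid morphism $\theta$ yields an injective (resp. surjective) operad morphism $\T \theta$. I would simply cite that proposition to conclude this last assertion.

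I do not expect any genuine obstacle here, since every ingredient has already been assembled: the object-level and morphism-level statements are Propositions~\ref{prop:TOperade} and~\ref{prop:TOperadeMorph}, the injection/surjection clause is Proposition~\ref{prop:TInjSur}, and the only remaining content — preservation of identities and of composition — reduces to trivial coordinatewise computations. The proof is therefore essentially a bookkeeping assembly of previously proved facts, and the mildest point of care is simply to state the identity and composition checks explicitly rather than leave them implicit.
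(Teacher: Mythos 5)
Your proposal is correct and follows exactly the same route as the paper's proof: cite Proposition~\ref{prop:TOperade} and Proposition~\ref{prop:TOperadeMorph} for well-definedness on objects and morphisms, verify preservation of identities and composition by coordinatewise computation, and invoke Proposition~\ref{prop:TInjSur} for the injection/surjection clause. Nothing is missing; the coordinatewise checks you outline are precisely those carried out in the paper.
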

\begin{proof}
    By Proposition~\ref{prop:TOperade},~$\T$ constructs a set-operad from a
    monoid, and by Proposition~\ref{prop:TOperadeMorph}, an operad morphism
    from a monoid morphism.

    Let~$M$ be a monoid, $\theta : M \to M$ be the identity morphism on~$M$,
    and~$x$ be an element of~$\T M(n)$. We have
    \begin{equation}
        \T \theta(x) \enspace = \enspace
        (\theta(x_1), \; \dots, \; \theta(x_n)) \enspace = \enspace
        (x_1, \; \dots, \; x_n) \enspace = \enspace
        x,
    \end{equation}
    showing that~$\T \theta$ is the identity morphism on the operad~$\T M$.

    Let $(L, \bullet_L)$, $(M, \bullet_M)$, and $(N, \bullet_N)$ be three
    monoids, $\theta : L \to M$ and $\omega : M \to N$ be two monoid morphisms,
    and~$x$ be an element of~$\T L(n)$. We have \vspace{-.8em}
    \begin{equation} \begin{split}
        \T(\omega \circ \theta)(x)
        & = \left(\omega\left(\theta(x_1)\right), \; \dots, \;
            \omega\left(\theta(x_n)\right)\right) \\
        & = \T \omega\left(\theta(x_1), \; \dots, \; \theta(x_n)\right) \\
        & = \T \omega\left( \T \theta\left(x_1, \; \dots, \; x_n\right)\right) \\
        & = (\T \omega \circ \T \theta)(x),
    \end{split} \end{equation}
    showing that~$\T$ is compatible with map composition. Hence,~$\T$ is
    a functor, and by Proposition~\ref{prop:TInjSur},~$\T$ also respects
    injections and surjections.
\end{proof}

\section{Some operads obtained by the construction} \label{sec:Exemples}

Through this Section, we consider examples of applications of the functor~$\T$.
We shall mainly consider, given a monoid~$M$, some suboperads of~$\T M$,
symmetric or not, and generated by a finite subset of~$\T M$.

We shall denote by~$\EnsNat$ the additive monoid of integers, and for
all~$\ell \geq 1$, by~$\EnsNat_\ell$ the quotient of~$\EnsNat$ consisting
in the set $\{0, 1, \dots, \ell - 1\}$ with the addition modulo $\ell$ as
the operation of $\EnsNat_\ell$. Note that since~$\T$ is a functor that
respects surjective maps (see Theorem~\ref{thm:TFonct}),~$\T \EnsNat_\ell$
is a quotient operad of~$\T \EnsNat$.

The operads constructed in this Section fit into the diagram of non-symmetric
operads represented in Figure~\ref{fig:DiagrammeOperades}. Table~\ref{tab:Operades}
summarizes some information about these operads.
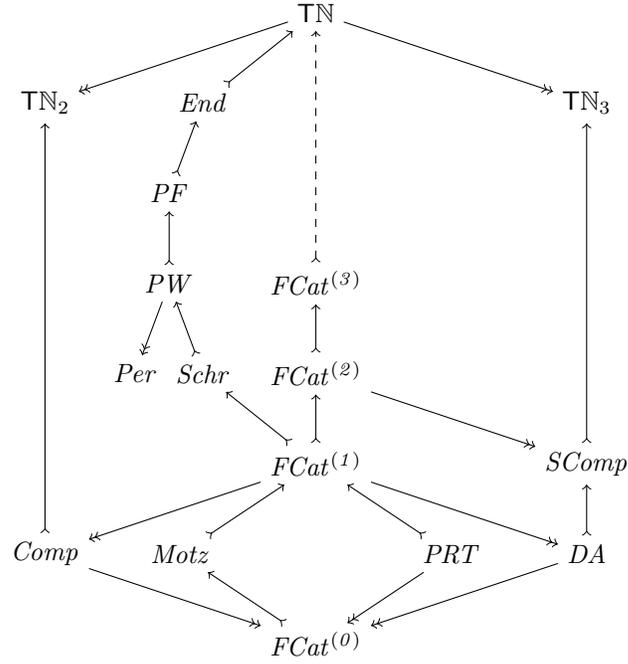
\begin{figure}[ht]
    \centering
    \begin{tikzpicture}[scale=.6]
        \node(TN)at(2,0){$\T \EnsNat$};
        \node(TN2)at(-4,-2){$\T \EnsNat_2$};
        \node(TN3)at(8,-2){$\T \EnsNat_3$};
        \node(End)at(-.5,-2){$\End$};
        \node(FP)at(-1.25,-4){$\FP$};
        \node(MT)at(-1.25,-6){$\MT$};
        \node(Per)at(-2,-8){$\Per$};
        \node(Schr)at(-.5,-8){$\Schr$};
        \node(FCat1)at(2,-10){$\FCat{1}$};
        \node(FCat2)at(2,-8){$\FCat{2}$};
        \node(FCat3)at(2,-6){$\FCat{3}$};
        \node(SComp)at(8,-10){$\SComp$};
        \node(AnD)at(8,-12){$\AnD$};
        \node(APE)at(5,-12){$\APE$};
        \node(Motz)at(-1,-12){$\Motz$};
        \node(Comp)at(-4,-12){$\Comp$};
        \node(FCat0)at(2,-14){$\FCat{0}$};
        \draw[Surjection](TN)--(TN2);
        \draw[Surjection](TN)--(TN3);
        \draw[Injection](End)--(TN);
        \draw[Injection](FP)--(End);
        \draw[Injection](MT)--(FP);
        \draw[Surjection](MT)--(Per);
        \draw[Injection](Schr)--(MT);
        \draw[Injection](FCat1)--(Schr);
        \draw[Injection](FCat1)--(FCat2);
        \draw[Injection](FCat2)--(FCat3);
        \draw[Injection,dashed](FCat3)--(TN);
        \draw[Surjection](FCat2)--(SComp);
        \draw[Injection](SComp)--(TN3);
        \draw[Surjection](FCat1)--(AnD);
        \draw[Injection](AnD)--(SComp);
        \draw[Injection](APE)--(FCat1);
        \draw[Injection](Motz)--(FCat1);
        \draw[Surjection](FCat1)--(Comp);
        \draw[Injection](Comp)--(TN2);
        \draw[Surjection](Comp)--(FCat0);
        \draw[Surjection](APE)--(FCat0);
        \draw[Surjection](AnD)--(FCat0);
        \draw[Injection](FCat0)--(Motz);
    \end{tikzpicture}
    \caption{The diagram of non-symmetric suboperads and quotients of~$\T \EnsNat$.
    Arrows~$\rightarrowtail$ (resp.~$\twoheadrightarrow$) are injective
    (resp. surjective) non-symmetric operad morphisms.}
    \label{fig:DiagrammeOperades}
\end{figure}
\begin{table}[ht]
    \centering
    \begin{tabular}{c|c|c|c}
        Operad     & Generators & First dimensions & Combinatorial objects \\ \hline
        $\End$     & ---          & $1, 4, 27, 256, 3125$       & Endofunctions \\
        $\FP$      & ---          & $1, 3, 16, 125, 1296$       & Parking functions \\
        $\MT$      & ---          & $1, 3, 13, 75, 541$         & Packed words \\
        $\Per$     & ---          & $1, 2, 6, 24, 120$          & Permutations \\
        $\APE$     & $01$  & $1, 1, 2, 5, 14, 42$               & Planar rooted trees \\
        $\FCat{k}$ & $00$, $01$, \dots, $0k$ & Fuss-Catalan numbers & $k$-Dyck paths \\
        $\Schr$    & $00$, $01$, $10$ & $1, 3, 11, 45, 197$     & Schröder trees \\
        $\Motz$    & $00$, $010$      & $1, 1, 2, 4, 9, 21, 51$ & Motzkin paths \\
        $\Comp$    & $00$, $01$       & $1, 2, 4, 8, 16, 32$    & Integer compositions \\
        $\AnD$     & $00$, $01$       & $1, 2, 5, 13, 35, 96$   & Directed animals \\
        $\SComp$   & $00$, $01$, $02$ & $1, 3, 27, 81, 243$     & Segmented integer compositions
    \end{tabular}
    \caption{Generators, first dimensions, and combinatorial objects involved
    in the non-symmetric suboperads and quotients of~$\T \EnsNat$.}
    \label{tab:Operades}
\end{table}

\subsection{Endofunctions, parking functions, packed words, and permutations}

Neither the set of endofunctions nor the set of parking functions, packed
words, and permutations are suboperads of~$\T \EnsNat$. Indeed, one has
the following counterexample:
\begin{equation}
    \textcolor{Bleu}{1}{\bf 2} \circ_2 \textcolor{Rouge}{12} =
    \textcolor{Bleu}{1}\textcolor{Rouge}{34},
\end{equation}
and, even if~$12$ is a permutation,~$134$ is not an endofunction.

Therefore, let us call a word~$u$ a \emph{twisted} endofunction (resp.
parking function, packed word, permutation) if the word
$(u_1 + 1, u_2 + 1, \dots, u_n + 1)$ is an endofunction (resp. parking
function, packed word, permutation). For example, the word~$2300$ is a
twisted endofunction since~$3411$ is an endofunction. Let us denote by~$\End$
(resp.~$\FP$, $\MT$, $\Per$) the set of endofunctions (resp. parking functions,
packed words, permutations). Under this reformulation, one has the following result:
\begin{Proposition} \label{prop:OpEndFPMT}
    The sets~$\End$, $\FP$, and $\MT$ are suboperads of~$\T \EnsNat$.
\end{Proposition}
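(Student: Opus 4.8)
The plan is to check, for each of the three families $S \in \{\End, \FP, \MT\}$ viewed through the twisted reformulation, the three defining properties of a suboperad of $\T \EnsNat$: that $S$ contains the unit, that $S$ is stable under the symmetric group action, and that $S$ is closed under the partial compositions $\circ_i$. The unit of $\T \EnsNat$ is $\Unite = (0)$, and since $(1)$ is simultaneously an endofunction, a parking function, and a packed word of size one, $(0)$ is a twisted instance of all three; hence $\Unite \in S$ in each case. Stability under the action is immediate: each family is specified by a condition on the multiset of letters of a word (endofunctions by a bound on each letter, parking functions by the sorted sequence, packed words by the set of values), and all these conditions are invariant under permuting the letters, so $x \cdot \sigma \in S$ whenever $x \in S$. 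It remains to treat closure under $\circ_i$, recalling that $x \circ_i y = (x_1, \dots, x_{i-1}, x_i + y_1, \dots, x_i + y_m, x_{i+1}, \dots, x_n)$.

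For $\End$ and $\MT$ the compositional closure is direct. A length-$n$ word $u$ is a twisted endofunction exactly when $u_j \in \{0, \dots, n-1\}$ for all $j$; if $x \in \End(n)$ and $y \in \End(m)$, then the untouched letters satisfy $x_j \le n-1 \le n+m-2$ and each grafted letter satisfies $x_i + y_k \le (n-1)+(m-1) = n+m-2$, so $x \circ_i y \in \End(n+m-1)$. For packed words, I would use that $u$ is a twisted packed word exactly when its set of values is an initial segment $\{0, 1, \dots, p\}$ of $\EnsNat$. The grafted block $x_i + y_1, \dots, x_i + y_m$ has value set $\{x_i, x_i+1, \dots, x_i + q\}$, where $\{0, \dots, q\}$ is the value set of $y$; since $x_i \le p$, this block overlaps or abuts $\{0, \dots, p\}$, every value of $\{0, \dots, p\}$ other than $x_i$ still occurs among the untouched letters, and $x_i$ itself is recovered by the block. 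Hence the value set of $x \circ_i y$ is $\{0, 1, \dots, \max(p, x_i+q)\}$, again an initial segment, so $x \circ_i y \in \MT$.

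The parking-function case is the delicate one, and I expect it to be the main obstacle, because grafting shifts the whole $y$-block by $x_i$ and one must show the two resulting sub-counts recombine correctly. I would use the characterization that a length-$n$ word $u$ is a twisted parking function if and only if $|\{j : u_j \le s\}| \ge s+1$ for every $s \ge 0$. Writing $a := x_i$ and fixing $s \ge 0$, the number of letters of $x \circ_i y$ that are $\le s$ equals $|\{j \ne i : x_j \le s\}| + |\{k : y_k \le s - a\}|$. If $s < a$, the grafted block contributes nothing, while position $i$ has value $a > s$, so dropping it costs nothing and the $x$-count alone is already $\ge s+1$. If $s \ge a$, the $x$-part contributes at least $(s+1)-1 = s$ (losing only position $i$) and the $y$-part contributes at least $(s-a)+1$ by the criterion applied to $y$, giving a total at least $s + (s-a+1) \ge s+1$ since $s \ge a$. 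In both cases the criterion holds for $x \circ_i y$, so $\FP$ is closed under $\circ_i$. Combining these three closure arguments with the unit and equivariance checks shows that $\End$, $\FP$, and $\MT$ are suboperads of $\T \EnsNat$.
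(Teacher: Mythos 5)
Your three-part plan (unit, symmetric group action, closure under $\circ_i$) is exactly what a complete proof requires --- the paper states this proposition without giving a proof --- and your arguments for the unit, the group action, for $\End$, and for $\MT$ are all correct. The gap is in the parking-function case, precisely the one you flagged as delicate. It begins with your characterization: no length-$n$ word can satisfy $|\{j : u_j \le s\}| \ge s+1$ for \emph{every} $s \ge 0$, because for $s \ge n$ the left-hand side is at most $n$. The correct statement caps the bound: $u$ is a twisted parking function if and only if $|\{j : u_j \le s\}| \ge \min(s+1, n)$ for all $s \ge 0$, equivalently the uncapped inequality for $0 \le s \le n-1$ only. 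This is not a cosmetic slip, because when you verify the criterion for $z := x \circ_i y$ the threshold $s$ runs up to $n+m-2$, and in your case $s \ge a$ you apply the uncapped criterion to $x$ at threshold $s$ and to $y$ at threshold $s-a$, both of which can lie outside the legitimate range. Concretely, take $x = 00$, $y = 00$, $i = 1$, so $z = 000$, $a = 0$, and $s = 2$: the $x$-part is $1$, not at least $s = 2$, and the $y$-part is $2$, not at least $s-a+1 = 3$; the required conclusion $3 \ge s+1$ holds only because both counts saturate, a mechanism your argument never invokes.

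The repair stays entirely within your framework. Use the capped criterion (which also gives $a = x_i \le n-1$), note that the criterion for $z$ only needs checking for $0 \le s \le n+m-2$, and split the case $s \ge a$ further. If $s \le n-1$ and $s-a \le m-1$, your computation $s + (s-a+1) \ge s+1$ applies verbatim. If $s \le n-1$ and $s-a \ge m$, then every letter of $y$ is at most $m-1 \le s-a$, so the $y$-part equals $m$ and the total is at least $s + m \ge s+1$. If $s \ge n$, then every letter of $x$ is at most $n-1 \le s$, so the $x$-part equals $n-1$; when $s-a \le m-1$ the $y$-part is at least $s-a+1$ and the total is at least $(n-1)+(s-a+1) = n+s-a \ge s+1$ since $a \le n-1$, while when $s-a \ge m$ the $y$-part equals $m$ and the total is $(n-1)+m = n+m-1 \ge s+1$ since $s \le n+m-2$. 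With these boundary cases added, the parking-function closure, and with it the proposition, is fully proved.
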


For example, we have in $\End$ the following substitution:
\begin{equation}
    \textcolor{Bleu}{2}{\bf 1} \textcolor{Bleu}{23} \circ_2 \textcolor{Rouge}{30313} =
        \textcolor{Bleu}{2}\textcolor{Rouge}{41424}\textcolor{Bleu}{23},
\end{equation}
and the following application of the symmetric group action:
\begin{equation}
    11210 \cdot 23514 = 12011.
\end{equation}

Note that~$\End$ is not a finitely generated operad. Indeed, the twisted
endofunctions $u := u_1 \dots u_n$ satisfying $u_i := n - 1$ for all~$i \in [n]$
cannot be obtained by substitutions involving elements of~$\End$ of arity
smaller than~$n$. Similarly,~$\FP$ is not a finitely generated operad since
the twisted parking functions $u := u_1 \dots u_n$ satisfying~$u_i := 0$
for all $i \in [n - 1]$ and $u_n := n - 1$ cannot be obtained by substitutions
involving elements of~$\FP$ of arity smaller than~$n$. However, the
operad~$\MT$ is a finitely generated operad:
\begin{Proposition}  \label{prop:GenerationMT}
    The operad~$\MT$ is the suboperad of~$\T \EnsNat$ generated, as a symmetric
    operad, by the elements~$00$ and~$01$.
\end{Proposition}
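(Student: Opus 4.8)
The plan is to prove the statement by a double inclusion. Let $\CalP$ denote the suboperad of $\T \EnsNat$ generated, as a symmetric operad, by $00$ and $01$. One inclusion, namely $\CalP \subseteq \MT$, is the easy direction: since $\MT$ is a suboperad of $\T \EnsNat$ by Proposition~\ref{prop:OpEndFPMT}, it suffices to check that the two generators $00$ and $01$ lie in $\MT$, which is immediate because $11$ and $12$ are both packed words. The real content is the reverse inclusion $\MT \subseteq \CalP$, that is, showing every twisted packed word can be obtained from $00$ and $01$ by a finite sequence of substitutions $\circ_i$ and symmetric-group actions $\cdot\,\sigma$.

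First I would set up an induction on the arity $n$. The base cases $n = 1$ (the unit $\Unite = (0)$, noting $0$ is the twisted image of the packed word $1$) and $n = 2$ are handled directly, since all twisted packed words of arity $2$ are $00$, $01$, $10$, and the last is obtained from $01$ by the transposition action. For the inductive step, I would take an arbitrary twisted packed word $u = u_1 \dots u_n$ of arity $n \geq 3$ and exhibit a way to write it as $v \circ_i w$ (possibly followed by a permutation) where $v$ and $w$ are twisted packed words of strictly smaller arity. Recall that $u$ being a twisted packed word means $(u_1 + 1, \dots, u_n + 1)$ is a packed word, i.e. its set of values is exactly $\{1, 2, \dots, \max\}$ with no gaps; equivalently, the values appearing in $u$ form an initial interval $\{0, 1, \dots, k\}$ of $\EnsNat$.

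The key structural idea I would pursue is that the substitution $v \circ_i w$ replaces the letter $v_i$ by the block $(v_i \bullet w_1, \dots, v_i \bullet w_m) = (v_i + w_1, \dots, v_i + w_m)$, so I want to find a position whose removal (or splitting) leaves a smaller packed word while the inserted block can be built from the generators. The cleanest approach is to locate a letter of $u$ equal to the maximal value $k$: deleting one occurrence of $k$ yields a word that, after possibly merging, is again a twisted packed word of arity $n-1$ provided $k$ still occurs or $k$ becomes the new top. I would factor $u$ as a substitution of a smaller packed word into a copy of $00$ or $01$: concretely, if a letter $a$ is immediately followed by an equal letter $a$, that adjacent pair is the image of the block produced by composing with $00$ (since $a \bullet 0 = a$ on both coordinates); if it is followed by $a+1$, it comes from $01$. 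By choosing an appropriate adjacent pair and using that packedness guarantees such local patterns exist, the outer word has smaller arity, and the symmetric action lets me first rearrange $u$ so that the pair to be contracted is adjacent.

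The step I expect to be the main obstacle is verifying that the factorization always respects packedness on both factors simultaneously: when I contract an adjacent pair to reduce arity, I must ensure the resulting shorter word is still \emph{twisted packed} (no value gap is created) and that the inserted length-$2$ block, shifted by the value at the insertion site, is itself expressible via $00$ or $01$ after removing the shift — the shift is exactly what the $v_i + (-)$ in the substitution supplies, so this should work, but the bookkeeping of which value gets deleted and whether $\{0, \dots, k\}$ remains an interval requires a careful case analysis (for instance when the contracted value is the unique maximum, or the unique occurrence of some intermediate value). I would organize this by always contracting a repeated value, or an adjacent ascent $a, a+1$ where $a+1$ occurs more than once, so that no value disappears from the support; a short argument that such a contractible position always exists in a packed word of arity $\geq 3$ completes the induction.
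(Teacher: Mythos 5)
Your overall strategy (double inclusion, induction on arity, using the symmetric group action to rearrange, and inverting the substitutions $v \circ_i 00$ and $v \circ_i 01$ as contractions of an adjacent equal pair or an adjacent ascent) is sound, and the easy inclusion and the base cases are handled correctly. The genuine gap is in your final organizing claim: that it suffices to contract ``a repeated value, or an adjacent ascent $a, a+1$ where $a+1$ occurs more than once, so that no value disappears from the support,'' together with the assertion that ``such a contractible position always exists in a packed word of arity $\geq 3$.'' That existence claim is false: the twisted packed word $012$ (and, more generally, any twisted permutation) has no repeated letter and no ascent whose upper letter occurs more than once, so your criterion yields no contractible position at all. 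These words are exactly the ones for which a value \emph{must} disappear from the support in any arity-lowering factorization, so they cannot be legislated away; you had in fact flagged the ``unique maximum'' case in your obstacle paragraph, but your final strategy excludes it instead of using it.

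The fix is small and completes the argument. Allow a third kind of contraction: an adjacent ascent $a, a+1$ in which $a+1$ is the \emph{maximal} letter of the word, even if it occurs only once. Removing that letter leaves the support equal to $\{0, \dots, a\}$ or $\{0, \dots, a+1\}$, in either case an initial interval, so the contracted word is still a twisted packed word. Existence of a contractible position then follows after sorting: act by a permutation so that $u$ is nondecreasing; either some letter has multiplicity at least $2$, giving an adjacent equal pair contractible by $00$ with unchanged support, or all letters are distinct, forcing the sorted word to be $012 \cdots (n-1)$, whose last ascent is contractible by $01$ (indeed $01 \cdots (n-2) \circ_{n-1} 01 = 01 \cdots (n-1)$). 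In both cases the induction hypothesis applies to the contracted word, which has arity $n-1 \geq 2$, and the inverse permutation recovers the original $u$.
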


Let~$\K$ be a field and let us from now consider that~$\MT$ is an
operad in the category of $\K$-vector spaces, \emph{i.e.},~$\MT$ is the
free $\K$-vector space over the set of twisted packed words with substitution
maps and the right symmetric group action extended by linearity.

Let~$I$ be the free $\K$-vector space over the set of twisted packed words
having multiple occurrences of a same letter.

\begin{Proposition} \label{prop:IdealDeMT}
    The vector space $I$ is an operadic ideal of~$\MT$. Moreover, the operadic
    quotient $\Per := \MT/_I$ is the free vector space over the set of
    twisted permutations.
\end{Proposition}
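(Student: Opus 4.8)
The plan is to verify the two claims in turn: first that $I$ is an operadic ideal, then that the quotient $\MT/_I$ is the free vector space over twisted permutations. Recall that an operadic ideal of a symmetric operad is a graded subspace stable under substitution from either side and under the symmetric group action. I would therefore take a basis element $u \in I$, that is, a twisted packed word having at least one repeated letter, and check that $u \circ_i v$ and $w \circ_i u$ again lie in $I$ for arbitrary basis elements $v, w \in \MT$, and that $u \cdot \sigma \in I$.

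The key observation I would exploit is that the substitution map~(\ref{eq:TSub}) applied in $\T \EnsNat$ only \emph{adds} the letter $x_i$ (of the left word) onto every letter of the right word and deletes the single entry $x_i$, while leaving the other entries of $x$ untouched. Stability under the symmetric group action is immediate, since permuting letters preserves the multiset of letters and hence preserves the property of having a repeated letter. For the two-sided substitution stability I would argue at the level of \emph{repeated letters being preserved}. If $u = w \circ_i v$ where $w$ contains two equal entries, say $w_a = w_b$ with $a \neq b$ and both positions survive the substitution (or at most one is the position $i$ being substituted), then adding a common shift to a block does not break the equality $w_a = w_b$ among the untouched letters; I would treat carefully the subcase where one of the repeated positions is position $i$ itself, where the two equal copies get replaced by the block $w_i \bullet v_1, \dots, w_i \bullet v_m$, and note that the other copy of that value persists elsewhere, so a repetition remains. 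Symmetrically, if $v$ already has a repeated letter, then the block $w_i \bullet v_1, \dots, w_i \bullet v_m$ inside $w \circ_i v$ inherits that repetition since adding the constant $w_i$ is injective. Thus in every case $w \circ_i v$ and $v \circ_i w$ have a repeated letter and lie in $I$, so $I$ is an ideal.

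For the second claim, I would identify the quotient $\MT/_I$ with the subspace spanned by twisted packed words having \emph{all distinct} letters, which, given the packed-word condition, are exactly the twisted permutations. The point is that $\MT$ as a vector space decomposes as a direct sum of the span of twisted permutations and the span $I$ of twisted packed words with a repeated letter, so passing to the quotient sends each class of a twisted permutation to a distinct nonzero element and kills the rest; hence $\{[\sigma] : \sigma \text{ a twisted permutation}\}$ is a basis of $\MT/_I$, giving the claimed freeness. The main obstacle I anticipate is the bookkeeping in the substitution-stability argument: one must be precise about which positions of the left word are deleted or shifted and confirm that a repetition genuinely survives in each configuration (in particular when the repeated pair straddles the substitution point or sits inside the grafted block), rather than this being a one-line packing argument.
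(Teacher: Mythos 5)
Your overall structure is the natural and correct one: check stability of $I$ under the symmetric group action and under substitution on either side, then identify $\MT/_I$ with the span of the repeat-free twisted packed words, which are exactly the twisted permutations. (The paper, an extended abstract, gives no proof of this proposition, so the comparison is with the argument it implicitly relies on.) The linear-algebra part of your argument ($\MT$ splits as the span of twisted permutations plus $I$, so the quotient is free over the twisted permutations) is fine, as are two of the three substitution cases: a repeated pair of the outer word avoiding position $i$ (both copies survive, shifted or not), and a repetition inside the inner word (adding the constant $x_i$ to the block is injective, so the repetition is inherited).

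There is, however, a genuine gap in the one subcase you yourself flagged as delicate: when the outer word $x \in I$ has its repetition exactly at the substitution position, say $x_i = x_b$ with $b \neq i$ and no repetition elsewhere in $x$. Your justification --- ``the other copy of that value persists elsewhere, so a repetition remains'' --- is a non sequitur: after substitution the occurrence at position $i$ is gone, replaced by the block $x_i + y_1, \dots, x_i + y_m$, so the surviving single copy $x_b$ does not by itself yield a repeated letter. What saves the claim is a property of $\MT$ that you never invoke: every twisted packed word contains the letter $0$ (since every packed word contains the letter $1$). Hence some $y_j = 0$, the block contains $x_i + 0 = x_i$, and this block entry together with the surviving $x_b$ gives the required repetition. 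This ingredient is not optional: in $\T \EnsNat$ at large the statement fails, e.g.
\begin{equation}
    00 \circ_1 12 = 120,
\end{equation}
which has no repeated letter even though the left factor does; it is precisely because the inner word lies in $\MT$ (hence contains a $0$) that $I$ is a two-sided operadic ideal of $\MT$ rather than of $\T \EnsNat$. With that one observation inserted, your proof goes through.
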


One has, for all twisted permutations~$x$ and~$y$, the following expression
for the substitution maps in~$\Per$: \vspace{-.8em}
\begin{equation} \label{eq:SubsPartiellePer}
    x \circ_i y =
    \begin{cases}
        x \circ_i y & \mbox{if $x_i = \max x$,} \\
        0_\K    & \mbox{otherwise,}
    \end{cases}
\end{equation}
where~$0_\K$ is the null vector of~$\Per$ and the map~$\circ_i$ in the
right member of~(\ref{eq:SubsPartiellePer}) is the substitution map of~$\MT$.

\subsection{Planar rooted trees}

Let~$\APE$ be the non-symmetric suboperad of~$\T \EnsNat$ generated by~$01$.
One has the following characterization of the elements of~$\APE$:
\begin{Proposition} \label{prop:MotsAPE}
    The elements of~$\APE$ are exactly the words~$x$ on the alphabet~$\EnsNat$
    that satisfy~$x_1 = 0$ and $1 \leq x_{i + 1} \leq x_i + 1$ for all $i \in [|x| - 1]$.
\end{Proposition}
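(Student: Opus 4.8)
The plan is to prove the equality $\APE = A$, where $A$ denotes the set of words $x$ over $\EnsNat$ satisfying $x_1 = 0$ and $1 \le x_{i+1} \le x_i + 1$ for all $i \in [|x|-1]$, by establishing the two inclusions separately.

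First I would show $\APE \subseteq A$ by checking that $A$ is itself a non-symmetric suboperad of $\T \EnsNat$ containing the generator $01$; since $\APE$ is by definition the smallest such suboperad, this yields the inclusion. That $\Unite = (0)$ and $01$ belong to $A$ is immediate from the defining inequalities. For closure, I would take $x \in A$ of arity $n$, $y \in A$ of arity $m$, and inspect $x \circ_i y$ adjacency by adjacency. The adjacent pairs lying wholly inside the surviving part of $x$ or inside the grafted block $x_i + y_1, \dots, x_i + y_m$ inherit their inequalities from $x \in A$ and $y \in A$, so the only genuine work occurs at the two seams of the grafted block. Here the decisive facts are $y_1 = 0$, so that the first grafted letter equals $x_i$ and reuses an inequality already valid in $x$, and $y_j \ge 1$ for $j \ge 2$, which secures the lower bound $x_i + y_j \ge 1$ for the remaining grafted letters. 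I expect these seam inequalities to be the only point requiring attention, and each collapses to a one-line estimate.

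For the reverse inclusion $A \subseteq \APE$ I would induct on the arity $n = |x|$. When $n = 1$ the constraints force $x = (0) = \Unite \in \APE$. For $n \ge 2$, the idea is to peel off a single letter by an application of $01$: let $q$ be the leftmost position where $x$ attains its maximum value. Since the inequalities pin $x_2$ to $1$, the maximum exceeds $x_1 = 0$, so $q \ge 2$; minimality of $q$ forces $x_{q-1} < x_q$, which together with $x_q \le x_{q-1} + 1$ yields $x_q = x_{q-1} + 1$, while maximality gives $x_{q+1} \le x_q$ when $q < n$. Deleting the letter at position $q$ produces a word $x'$ of arity $n-1$ that still lies in $A$, the only new adjacency being $(x_{q-1}, x_{q+1})$, for which $1 \le x_{q+1} \le x_q = x_{q-1} + 1$ holds. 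By the induction hypothesis $x' \in \APE$, and a direct check gives $x = x' \circ_{q-1} 01$, so $x \in \APE$ because $\APE$ contains $01$ and is closed under substitution.

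The main obstacle is the choice of the position to delete in the inductive step: a generic ascent $x_{i+1} = x_i + 1$ cannot always be removed, because the next letter might be $x_i + 2$ and the merged adjacency $(x_i, x_i + 2)$ would break the bound $x_{i+2} \le x_i + 1$. Choosing the leftmost maximum avoids this precisely because no larger value sits immediately to its right. The remaining content of the proof is the routine verification of the seam inequalities in the first inclusion and of the substitution identity in the second.
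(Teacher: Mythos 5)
The paper, being an extended abstract, states Proposition~\ref{prop:MotsAPE} without proof, so there is no official argument to compare yours against; judged on its own merits, your proof is correct and complete. The first inclusion correctly reduces to the seam inequalities (using $y_1 = 0$ at the left seam, $y_j \geq 1$ for $j \geq 2$ inside the block, and $y_m \geq 0$ at the right seam), and in the second inclusion the choice of the leftmost maximum is exactly what makes the deletion step sound: it forces $x_q = x_{q-1} + 1$ and, when $q < |x|$, $x_{q+1} \leq x_q = x_{q-1} + 1$, so the shortened word stays in the set and $x = x' \circ_{q-1} 01$ reconstructs $x$. Your closing remark about why an arbitrary ascent cannot be deleted pinpoints the one genuine subtlety of the argument.
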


Proposition~\ref{prop:MotsAPE} implies that we can regard the elements
of arity~$n$ of~$\APE$ as planar rooted trees with~$n$ nodes. Indeed,
there is a bijection between words of~$\APE$ and such trees. Given a planar
rooted tree~$T$, one computes an element of~$\APE$ by labelling each
node~$x$ of~$T$ by its depth and then, by reading its labels following
a depth-first traversal of~$T$. Figure~\ref{fig:InterpretationAPE} shows
an example of this bijection.
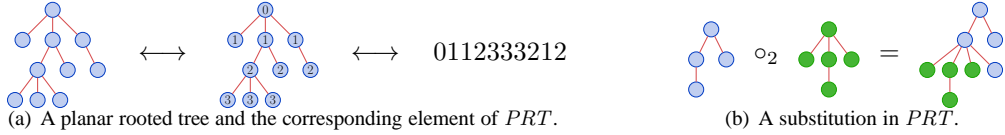
\begin{figure}[ht]
    \centering
    \subfigure[A planar rooted tree and the corresponding element of~$\APE$.]{\makebox[.65\textwidth]{\label{fig:InterpretationAPE}
    \scalebox{.2}{\raisebox{-9em}{\begin{tikzpicture}
        \node[Noeud](1)at(0,0){};
        \node[Noeud](2)at(-2,-2){};
        \node[Noeud](3)at(0,-2){};
        \node[Noeud](4)at(-1,-4){};
        \node[Noeud](5)at(-2.5,-6){};
        \node[Noeud](6)at(-1,-6){};
        \node[Noeud](7)at(0.5,-6){};
        \node[Noeud](8)at(1,-4){};
        \node[Noeud](9)at(2,-2){};
        \node[Noeud](10)at(3,-4){};
        \draw[Arete](1)--(2);
        \draw[Arete](1)--(3);
        \draw[Arete](3)--(4);
        \draw[Arete](4)--(5);
        \draw[Arete](4)--(6);
        \draw[Arete](4)--(7);
        \draw[Arete](3)--(8);
        \draw[Arete](1)--(9);
        \draw[Arete](9)--(10);
    \end{tikzpicture}}}
    $\quad \longleftrightarrow \quad$
    \scalebox{.2}{\raisebox{-9em}{\begin{tikzpicture}
        \node[Noeud](1)at(0,0){\Huge $0$};
        \node[Noeud](2)at(-2,-2){\Huge $1$};
        \node[Noeud](3)at(0,-2){\Huge $1$};
        \node[Noeud](4)at(-1,-4){\Huge $2$};
        \node[Noeud](5)at(-2.5,-6){\Huge $3$};
        \node[Noeud](6)at(-1,-6){\Huge $3$};
        \node[Noeud](7)at(0.5,-6){\Huge $3$};
        \node[Noeud](8)at(1,-4){\Huge $2$};
        \node[Noeud](9)at(2,-2){\Huge $1$};
        \node[Noeud](10)at(3,-4){\Huge $2$};
        \draw[Arete](1)--(2);
        \draw[Arete](1)--(3);
        \draw[Arete](3)--(4);
        \draw[Arete](4)--(5);
        \draw[Arete](4)--(6);
        \draw[Arete](4)--(7);
        \draw[Arete](3)--(8);
        \draw[Arete](1)--(9);
        \draw[Arete](9)--(10);
    \end{tikzpicture}}}
    $\quad \longleftrightarrow \quad 0112333212$}}
    \qquad
    \subfigure[A substitution in~$\APE$.]{\makebox[.25\textwidth]{\label{fig:SubsAPE}
    \scalebox{.2}{\raisebox{-7em}{\begin{tikzpicture}
        \node[Noeud](1)at(0,0){};
        \node[Noeud](2)at(-1,-2){};
        \node[Noeud](3)at(-1,-4){};
        \node[Noeud](4)at(1,-2){};
        \draw[Arete](1)--(2);
        \draw[Arete](2)--(3);
        \draw[Arete](1)--(4);
    \end{tikzpicture}}}
    $\enspace \circ_2 \enspace$
    \scalebox{.2}{\raisebox{-7em}{\begin{tikzpicture}
        \node[Noeud,Marque1](1)at(0,0){};
        \node[Noeud,Marque1](2)at(-1.5,-2){};
        \node[Noeud,Marque1](3)at(0,-2){};
        \node[Noeud,Marque1](4)at(0,-4){};
        \node[Noeud,Marque1](5)at(1.5,-2){};
        \draw[Arete](1)--(2);
        \draw[Arete](1)--(3);
        \draw[Arete](3)--(4);
        \draw[Arete](1)--(5);
    \end{tikzpicture}}}
    $\enspace = \enspace$
    \scalebox{.2}{\raisebox{-9em}{\begin{tikzpicture}
        \node[Noeud](1)at(0,0){};
        \node[Noeud](2)at(-1,-2){};
        \node[Noeud,Marque1](3)at(-3.5,-4){};
        \node[Noeud,Marque1](4)at(-2,-4){};
        \node[Noeud,Marque1](5)at(-2,-6){};
        \node[Noeud,Marque1](6)at(-.5,-4){};
        \node[Noeud](7)at(1,-4){};
        \node[Noeud](8)at(1,-2){};
        \draw[Arete](1)--(2);
        \draw[Arete](2)--(3);
        \draw[Arete](2)--(4);
        \draw[Arete](4)--(5);
        \draw[Arete](2)--(6);
        \draw[Arete](2)--(7);
        \draw[Arete](1)--(8);
    \end{tikzpicture}}}}}
    \caption{Interpretation of the elements and substitution of the non-symmetric
    operad~$\APE$ in terms of planar rooted trees.}
\end{figure}

This bijection offers an alternative way to compute the substitution in~$\APE$:
\begin{Proposition} \label{prop:SubsAPE}
    Let~$S$ and~$T$ be two planar rooted trees and~$s$ be the $i$th visited
    node of~$S$ following its depth-first traversal. The substitution
    $S \circ_i T$ in~$\APE$ amounts to graft the subtrees of the root
    of~$T$ as leftmost sons of~$s$.
\end{Proposition}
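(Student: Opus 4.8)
The plan is to prove that the depth-labelling bijection of Proposition~\ref{prop:MotsAPE} transports the grafting operation described in the statement onto the word substitution $\circ_i$ of $\T\EnsNat$, so that the latter may be read off directly from the former. Let $a_1, \dots, a_n$ be the nodes of $S$ listed in depth-first order and $b_1, \dots, b_m$ those of $T$, and write $w(S) = s_1 \dots s_n$ and $w(T) = t_1 \dots t_m$ for their depth words; thus $s_k$ is the depth of $a_k$ in $S$ and $t_k$ the depth of $b_k$ in $T$, and the node called $s$ in the statement is $a_i$. Applying the substitution formula~(\ref{eq:TSub}) to the additive monoid $\EnsNat$ gives
\begin{equation}
    w(S) \circ_i w(T) = (s_1, \dots, s_{i-1}, s_i + t_1, \dots, s_i + t_m, s_{i+1}, \dots, s_n),
\end{equation}
and it suffices to show that this word equals $w(S \circ_i T)$, where $S \circ_i T$ is the tree obtained by grafting the subtrees of the root of $T$ as leftmost sons of $a_i$.

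First I would settle the depth computation. The root $b_1$ of $T$ has depth $t_1 = 0$, so $s_i + t_1 = s_i$ is the depth of $a_i$ itself. For $k \geq 2$ the node $b_k$ lies at depth $t_k \geq 1$ inside one of the subtrees of the root of $T$; after grafting, the root of that subtree becomes a son of $a_i$, so $b_k$ descends $t_k$ edges below $a_i$ and hence acquires depth $s_i + t_k$ in $S \circ_i T$. This already identifies the letters $s_i + t_2, \dots, s_i + t_m$ as the depths in $S \circ_i T$ of $b_2, \dots, b_m$.

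The heart of the matter is a comparison of the depth-first traversals of $S$ and of $S \circ_i T$. Grafting modifies $S$ only by inserting new leftmost sons at $a_i$; it alters neither the set of nodes visited before $a_i$ nor their order, so the traversal of $S \circ_i T$ opens with $s_1, \dots, s_{i-1}, s_i$ and $a_i$ is still its $i$th visited node. Immediately after $a_i$ the traversal enters the grafted subtrees, which are now leftmost; since these are exactly the subtrees of the root of $T$, traversing them in order reproduces the depth-first order of $T$ deprived of its root, namely $b_2, \dots, b_m$, contributing the letters $s_i + t_2, \dots, s_i + t_m$ by the previous paragraph. Once these are exhausted the traversal returns to $a_i$, continues with its original sons, and then proceeds through the remainder of $S$ exactly as before, in unchanged order and at unchanged depths, contributing $s_{i+1}, \dots, s_n$. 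Concatenating the three blocks yields $w(S \circ_i T) = w(S) \circ_i w(T)$, and since the labelling is a bijection this is the desired identification of the substitution in $\APE$ with the stated grafting. I expect the main obstacle to be precisely this traversal bookkeeping: one must verify that inserting leftmost sons at $a_i$ leaves the prefix visited strictly before $a_i$ and the suffix visited after all descendants of $a_i$ untouched, so that the $m$ new letters are spliced in at exactly position $i$ and nowhere else.
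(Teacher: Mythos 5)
Your proof is correct: the identity $w(S \circ_i T) = w(S) \circ_i w(T)$, established by checking that grafting leftmost sons at $a_i$ preserves the traversal prefix $s_1,\dots,s_i$, splices in $b_2,\dots,b_m$ at depths $s_i+t_2,\dots,s_i+t_m$, and leaves the suffix $s_{i+1},\dots,s_n$ unchanged, is exactly the required statement, using that $t_1=0$ and that the depth-word labelling of Proposition~\ref{prop:MotsAPE} is a bijection. The paper, being an extended abstract, omits the proof of this proposition entirely, but your argument is the natural one and there is no gap in it.
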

Figure~\ref{fig:SubsAPE} shows an example of substitution in~$\APE$.

\begin{Proposition} \label{prop:PresentationAPE}
    The non-symmetric operad~$\APE$ is isomorphic to the free non-symmetric
    operad generated by one element of arity~$2$.
\end{Proposition}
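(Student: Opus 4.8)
The plan is to show that the free non-symmetric operad on one binary generator, namely the operad $\mathit{Mag}$ of planar binary trees (the magmatic operad), is isomorphic to $\APE$. The key idea is that both operads are built by free iteration of a single binary operation: in $\mathit{Mag}$, elements of arity $n$ are planar binary trees with $n$ leaves, composed by substitution at leaves with no relations imposed; in $\APE$, the generator is $01$, and by Proposition~\ref{prop:MotsAPE} together with the tree interpretation from Proposition~\ref{prop:SubsAPE}, substitution with $01$ corresponds to grafting. I would first check the dimension match: the free non-symmetric operad on one binary generator has dimension equal to the Catalan number $C_{n-1}$ in arity $n$, which agrees with the first dimensions $1,1,2,5,14,42$ listed for $\APE$ in Table~\ref{tab:Operades}.

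First I would define the candidate morphism. By the universal property of the free non-symmetric operad generated by one binary element, there is a unique non-symmetric operad morphism $\phi$ from this free operad to $\APE$ sending the generator to $01$. Since $\APE$ is by definition the non-symmetric suboperad of $\T\EnsNat$ generated by $01$, the morphism $\phi$ is automatically surjective onto $\APE$. The entire content of the proposition therefore reduces to proving that $\phi$ is injective, equivalently that the generator $01$ satisfies no nontrivial relations under substitution; that is, that the substitutions built from $01$ produce pairwise distinct words whenever the underlying free-operad elements (planar binary trees) are distinct.

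To establish injectivity, I would argue that $\APE$ is \emph{freely} generated by $01$, i.e. every element of $\APE$ admits a \emph{unique} factorization as an iterated substitution of copies of $01$. The cleanest route is via the tree interpretation: by Proposition~\ref{prop:MotsAPE} the elements of $\APE$ are in bijection with planar rooted trees (on $n$ nodes for arity $n$), and planar rooted trees are themselves in bijection with planar binary trees with $n$ leaves through the classical left-child/right-sibling (rotation) correspondence, which is exactly the bijection between the Catalan families. Under Proposition~\ref{prop:SubsAPE}, grafting by $01$ on the tree side corresponds precisely to the grafting operation that generates planar rooted trees freely from the single ``add a node'' operation. I would then verify that this rotation bijection intertwines the substitution of $\APE$ (grafting the root-subtrees of $T$ as leftmost sons of the $i$th node, per Proposition~\ref{prop:SubsAPE}) with the substitution at leaves in $\mathit{Mag}$, so that it defines an operad isomorphism and coincides with $\phi$.

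The main obstacle I expect is verifying this compatibility of the two substitutions through the rotation bijection: one must track carefully how ``grafting the subtrees of the root of $T$ as leftmost sons of the node $s$'' on the planar-rooted-tree side translates, under left-child/right-sibling encoding, into ``grafting the binary tree $T$ at the $i$th leaf'' on the binary-tree side, and check that the depth-first visiting order of nodes matches the left-to-right leaf order. This is a bookkeeping argument about how the depth-first reading position $i$ of a node corresponds to a leaf position in the rotated binary tree, and about how leftmost-son insertion interacts with the rotation; once this correspondence is pinned down, freeness of $\mathit{Mag}$ transports to $\APE$ and injectivity of $\phi$ follows, completing the isomorphism.
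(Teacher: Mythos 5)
The paper, being an extended abstract, states Proposition~\ref{prop:PresentationAPE} without proof, so there is no official argument to measure you against; judged on its own terms, your proposal has the correct skeleton but a genuine gap at its center. The reduction is fine: by the universal property there is a unique morphism $\phi$ from the free non-symmetric operad $\mathit{Mag}$ on one binary generator to $\APE$ sending the generator to $01$, and it is surjective because $\APE$ is by definition the suboperad of $\T \EnsNat$ generated by $01$. Everything therefore hinges on injectivity, and this is precisely the step you defer: you assert that a left-child/right-sibling rotation bijection intertwines leaf-substitution in $\mathit{Mag}$ with the grafting of Proposition~\ref{prop:SubsAPE}, and you yourself label this verification ``the main obstacle'' and ``bookkeeping'' without carrying it out. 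That verification \emph{is} the mathematical content of the statement, and it is not innocuous: computing in $\T \EnsNat$, the evaluation of a binary tree with left subtree $B_L$ and right subtree $B_R$ is $\phi(B) = \phi(B_L) \cdot \bigl(\phi(B_R) + 1\bigr)$ (concatenation, all letters of the second factor incremented), which in tree terms attaches the tree of $B_R$ as the \emph{rightmost} child of the root of the tree of $B_L$ --- not literally the leftmost-son grafting of Proposition~\ref{prop:SubsAPE} --- so pinning down the correct rotation convention and matching depth-first node order with left-to-right leaf order is real work that your text does not do.

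The frustrating part is that you already hold every piece of a complete argument that avoids the intertwining altogether. By Proposition~\ref{prop:MotsAPE}, $\APE(n)$ is in bijection with planar rooted trees with $n$ nodes, so $|\APE(n)| = C_{n-1}$; likewise $|\mathit{Mag}(n)| = C_{n-1}$, the number of planar binary trees with $n$ leaves. Since $\phi$ restricts in each arity to a surjection between finite sets of equal cardinality, it is a bijection in every arity, hence an operad isomorphism --- no rotation bijection needed. Alternatively, if you want an argument internal to the words, injectivity follows from a unique-factorization lemma: every $w \in \APE(n)$ with $n \geq 2$ factors as $w = u \cdot (v + 1)$ with $u, v \in \APE$, and the factorization is unique because the split is forced to occur at the last occurrence of the letter $1$ in $w$ (every letter after that occurrence is at least $2$, and every admissible split point must have this property); induction on arity then gives injectivity of $\phi$. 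Either patch turns your outline into a proof; as written, it is not one.
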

Proposition~\ref{prop:PresentationAPE} also says that~$\APE$ is isomorphic
to the magmatic operad. Hence,~$\APE$ is a realization of the magmatic
operad. Moreover,~$\APE$ can be seen as a planar version of the operad~$\NAP$
of Livernet~\cite{Liv06}.

\subsection{Generalized Dyck paths}

Let~$k \geq 0$ be an integer and~$\FCat{k}$ be the non-symmetric
suboperad of~$\T \EnsNat$ generated by~$00$,~$01$,~\dots,~$0k$. One has
the following characterization of the elements of~$\FCat{k}$:
\begin{Proposition} \label{prop:MotsFCat}
    The elements of~$\FCat{k}$ are exactly the words~$x$ on the alphabet~$\EnsNat$
    that satisfy~$x_1 = 0$ and $0 \leq x_{i + 1} \leq x_i + k$ for all $i \in [|x| - 1]$.
\end{Proposition}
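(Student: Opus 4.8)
The plan is to prove the two inclusions separately. Write $C$ for the set of words $x$ over $\EnsNat$ satisfying $x_1 = 0$ and $0 \le x_{i+1} \le x_i + k$ for all $i \in [|x| - 1]$; the goal is to establish $\FCat{k} = C$. Note that since the monoid here is additive, its unit is $0$, so the unit of $\T\EnsNat$ is the one-letter word $(0)$, which lies in $C$.

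For the first inclusion $\FCat{k} \subseteq C$, I would use that $\FCat{k}$ is by definition the smallest non-symmetric suboperad containing the generators $0j$, so it suffices to check that $C$ is itself a non-symmetric suboperad containing them. The unit $(0)$ and each generator $(0,j)$ with $0 \le j \le k$ plainly lie in $C$. For stability under substitution, take $x, y \in C$ and compute $z := x \circ_i y$ using~(\ref{eq:TSub}); since the product is addition, $z$ is $x$ with its $i$th letter expanded into the block $(x_i + y_1, \dots, x_i + y_m)$. The two facts that drive the verification are that this block starts at level $x_i$ (because $y_1 = 0$) and stays weakly above it (because each $y_\ell \ge 0$). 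Then the defining inequalities hold at every position, being inherited from $x$ or from $y$, the only point requiring the observation above being the seam where the block meets $x_{i+1}$, where $x_{i+1} \le x_i + k \le (x_i + y_m) + k$ since $y_m \ge 0$. Hence $z \in C$.

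For the second inclusion $C \subseteq \FCat{k}$, I would argue by induction on $n := |x|$. The base case $n = 1$ is the unit $(0) \in \FCat{k}$. For $n \ge 2$ the idea is to peel off a single letter through an inverse substitution with a length-two generator. The naive choice of removing the last letter fails when the word decreases at its end, so instead I would remove a letter of \emph{maximal} value: choose any index $q \in \{2, \dots, n\}$ with $x_q = \max_j x_j$ (such $q$ exists because $x_1 = 0$), and set $a := (x_1, \dots, x_{q-1}, x_{q+1}, \dots, x_n)$. Then $x = a \circ_{q-1} (0, x_q - x_{q-1})$, and the increment $x_q - x_{q-1}$ lies in $[0,k]$ because $x_q$ is maximal and $x \in C$, so $(0, x_q - x_{q-1})$ is indeed one of the generators.

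The crux, and the step I expect to be the only genuine obstacle, is checking that $a$ again belongs to $C$ so that the induction hypothesis applies. Every adjacency constraint of $a$ is inherited from $x$ except the newly created one between $x_{q-1}$ and $x_{q+1}$, and here the maximality of $x_q$ is exactly what is needed, since $x_{q+1} \le x_q \le x_{q-1} + k$ (when $q = n$ this adjacency is absent and $a$ is merely a prefix of $x$). With $a \in C$ of length $n-1$, the induction hypothesis gives $a \in \FCat{k}$, and since $\FCat{k}$ is closed under substitution with its generator $(0, x_q - x_{q-1})$, we conclude $x \in \FCat{k}$, which completes the induction and hence the proof.
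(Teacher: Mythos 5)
Your proof is correct. Note that the paper, being a proceedings extended abstract, states Proposition~\ref{prop:MotsFCat} without any proof, so there is no argument of the author's to compare yours against; judged on its own merits, your proposal is complete. The easy inclusion is handled the standard way: you check that the set $C$ of words satisfying the two conditions is a non-symmetric suboperad of $\T \EnsNat$ containing the unit $(0)$ and the generators $00, 01, \dots, 0k$, and the seam inequalities you single out (the block starting at level $x_i$ because $y_1 = 0$, and $x_{i+1} \leq x_i + k \leq (x_i + y_m) + k$ because $y_m \geq 0$) are exactly the ones that need checking. The substantive point is the converse inclusion, and you correctly identify that the naive induction (peel off the last letter) fails precisely when the word ends with a descent, since then $x_n - x_{n-1} < 0$ and $(0, x_n - x_{n-1})$ is not a generator. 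Your fix of peeling a letter of \emph{maximal} value at a position $q \geq 2$ works: maximality gives both $0 \leq x_q - x_{q-1} \leq k$ (so the removed step comes from a generator) and $x_{q+1} \leq x_q \leq x_{q-1} + k$ (so the shortened word $a$ still lies in $C$), while the identity $x = a \circ_{q-1} (0, x_q - x_{q-1})$ is immediate from~(\ref{eq:TSub}) because the monoid product is addition. The only point worth spelling out slightly more than your parenthetical remark is the existence of such a $q$: since all letters are nonnegative and $x_1 = 0$, the maximum of $x$ is attained at some position at least $2$, for if it were attained only at position $1$ the maximum would equal $0$ and then every letter would attain it. With that small elaboration, the induction closes cleanly.
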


Let us recall that a \emph{$k$-Dyck path} of length~$n$ is a path in~$\EnsNat^2$
connecting the points $(0, 0)$ and $((k + 1)n, 0)$ and consisting in~$n$
\emph{up steps} $(1, k)$ and $kn$ \emph{down steps} $(1, -1)$. It is
well-known that $k$-Dyck paths are enumerated by Fuss-Catalan numbers~\cite{DM47}.
Proposition~\ref{prop:MotsFCat} implies that we can regard the elements
of arity~$n$ of~$\FCat{k}$ as $k$-Dyck paths of length~$n$. Indeed, there
is a bijection between words of~$\FCat{k}$ and such paths. Given a $k$-Dyck
path~$P$, one computes an element~of $\FCat{k}$ by writing, from left to
right, the ordinate of the starting point of each up step of~$P$.
Figure~\ref{fig:BijFCatKDyck} shows an example of this bijection.
\begin{figure}[ht]
    \centering
    \scalebox{.2}{\raisebox{-8em}{\begin{tikzpicture}
        \draw[Grille] (0,0) grid (18,6);
        \node[NoeudDyck](0)at(0,0){};
        \node[NoeudDyck](1)at(1,2){};
        \node[NoeudDyck](2)at(2,1){};
        \node[NoeudDyck](3)at(3,0){};
        \node[NoeudDyck](4)at(4,2){};
        \node[NoeudDyck](5)at(5,4){};
        \node[NoeudDyck](6)at(6,6){};
        \node[NoeudDyck](7)at(7,5){};
        \node[NoeudDyck](8)at(8,4){};
        \node[NoeudDyck](9)at(9,3){};
        \node[NoeudDyck](10)at(10,2){};
        \node[NoeudDyck](11)at(11,1){};
        \node[NoeudDyck](12)at(12,3){};
        \node[NoeudDyck](13)at(13,5){};
        \node[NoeudDyck](14)at(14,4){};
        \node[NoeudDyck](15)at(15,3){};
        \node[NoeudDyck](16)at(16,2){};
        \node[NoeudDyck](17)at(17,1){};
        \node[NoeudDyck](18)at(18,0){};
        \draw[PasDyck](0)--(1);
        \draw[PasDyck](1)--(2);
        \draw[PasDyck](2)--(3);
        \draw[PasDyck](3)--(4);
        \draw[PasDyck](4)--(5);
        \draw[PasDyck](5)--(6);
        \draw[PasDyck](6)--(7);
        \draw[PasDyck](7)--(8);
        \draw[PasDyck](8)--(9);
        \draw[PasDyck](9)--(10);
        \draw[PasDyck](10)--(11);
        \draw[PasDyck](11)--(12);
        \draw[PasDyck](12)--(13);
        \draw[PasDyck](13)--(14);
        \draw[PasDyck](14)--(15);
        \draw[PasDyck](15)--(16);
        \draw[PasDyck](16)--(17);
        \draw[PasDyck](17)--(18);
    \end{tikzpicture}}}
    $\quad \longleftrightarrow \quad$
    \scalebox{.2}{\raisebox{-8em}{\begin{tikzpicture}
        \draw[Grille] (0,0) grid (18,6);
        \node[NoeudDyck](0)at(0,0){};
        \node[NoeudDyck](1)at(1,2){};
        \node[NoeudDyck](2)at(2,1){};
        \node[NoeudDyck](3)at(3,0){};
        \node[NoeudDyck](4)at(4,2){};
        \node[NoeudDyck](5)at(5,4){};
        \node[NoeudDyck](6)at(6,6){};
        \node[NoeudDyck](7)at(7,5){};
        \node[NoeudDyck](8)at(8,4){};
        \node[NoeudDyck](9)at(9,3){};
        \node[NoeudDyck](10)at(10,2){};
        \node[NoeudDyck](11)at(11,1){};
        \node[NoeudDyck](12)at(12,3){};
        \node[NoeudDyck](13)at(13,5){};
        \node[NoeudDyck](14)at(14,4){};
        \node[NoeudDyck](15)at(15,3){};
        \node[NoeudDyck](16)at(16,2){};
        \node[NoeudDyck](17)at(17,1){};
        \node[NoeudDyck](18)at(18,0){};
        \draw[PasDyck](0)--(1);
        \draw[PasDyck](1)--(2);
        \draw[PasDyck](2)--(3);
        \draw[PasDyck](3)--(4);
        \draw[PasDyck](4)--(5);
        \draw[PasDyck](5)--(6);
        \draw[PasDyck](6)--(7);
        \draw[PasDyck](7)--(8);
        \draw[PasDyck](8)--(9);
        \draw[PasDyck](9)--(10);
        \draw[PasDyck](10)--(11);
        \draw[PasDyck](11)--(12);
        \draw[PasDyck](12)--(13);
        \draw[PasDyck](13)--(14);
        \draw[PasDyck](14)--(15);
        \draw[PasDyck](15)--(16);
        \draw[PasDyck](16)--(17);
        \draw[PasDyck](17)--(18);
        \node[above of=0]{\Huge $0$};
        \node[above of=3]{\Huge $0$};
        \node[above of=4]{\Huge $2$};
        \node[above of=5]{\Huge $4$};
        \node[above of=11]{\Huge $1$};
        \node[above of=12]{\Huge $3$};
    \end{tikzpicture}}}
    $\quad \longleftrightarrow \quad 002413$
    \caption{A $2$-Dyck path and the corresponding element of the non-symmetric
    operad~$\FCat{2}$.}
    \label{fig:BijFCatKDyck}
\end{figure}
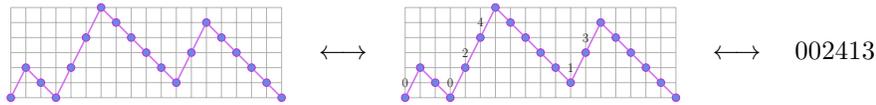

Note that the operad~$\FCat{0}$ is the commutative associative operad.
Next Theorem elucidates the structure of~$\FCat{1}$:
\begin{Theoreme} \label{thm:PresentationFCat1}
    The non-symmetric operad~$\FCat{1}$ is the free non-symmetric operad
    generated by two elements~$\FCatOpA$ and~$\FCatOpB$ of arity~$2$,
    subject to the three relations \vspace{-.8em}

    \begin{minipage}[c]{.4\linewidth}
        \begin{align}
            \FCatOpB \circ_1 \FCatOpB & = \FCatOpB \circ_2 \FCatOpB, \\
            \FCatOpA \circ_1 \FCatOpB & = \FCatOpB \circ_2 \FCatOpA,
        \end{align}
    \end{minipage} \hfill
    \begin{minipage}[c]{.4\linewidth}
        \begin{align}
            \FCatOpA \circ_1 \FCatOpA = \FCatOpA \circ_2 \FCatOpB.
        \end{align}
    \end{minipage}
\end{Theoreme}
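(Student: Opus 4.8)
The plan is to exhibit an isomorphism between $\FCat{1}$ and the non-symmetric operad $\mathcal{O}$ presented by the two binary generators $\FCatOpA$ and $\FCatOpB$ (which, read as words through Proposition~\ref{prop:MotsFCat}, are $01$ and $00$) modulo the congruence generated by the three stated relations. First I would check that these relations really hold in $\FCat{1}$: a direct application of~(\ref{eq:TSub}) in $\T\EnsNat$ gives, for example, $01 \circ_1 01 = 011 = 01 \circ_2 00$, and the two remaining relations are verified the same way. Because $\FCat{1}$ is by definition generated by $\FCatOpA$ and $\FCatOpB$, this yields a surjective morphism of non-symmetric operads $\psi : \mathcal{O} \twoheadrightarrow \FCat{1}$. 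It then suffices to prove that $\psi$ is injective, and for this it is enough to show $|\mathcal{O}(n)| \leq |\FCat{1}(n)|$ for all $n$; the right-hand side is the $n$-th Catalan number $C_n$, by Proposition~\ref{prop:MotsFCat} together with the bijection between $\FCat{1}(n)$ and $1$-Dyck paths.

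To bound $|\mathcal{O}(n)|$ I would orient the three relations into a rewriting system on the free non-symmetric operad, that is, on planar binary trees whose $n - 1$ internal nodes carry a label $\FCatOpA$ or $\FCatOpB$. Each relation is oriented from its left-comb member to its right-comb member, namely $\FCatOpB \circ_1 \FCatOpB \to \FCatOpB \circ_2 \FCatOpB$, $\FCatOpA \circ_1 \FCatOpB \to \FCatOpB \circ_2 \FCatOpA$, and $\FCatOpA \circ_1 \FCatOpA \to \FCatOpA \circ_2 \FCatOpB$. Each step is thus a right rotation performed at an internal node whose left child is internal. Termination is established through the statistic $\Phi(T) := \sum_\nu \ell(\nu)$, the sum over the internal nodes $\nu$ of the number $\ell(\nu)$ of leaves in the left subtree of $\nu$: a single rotation strictly decreases $\Phi$ — by the number $p \geq 1$ of leaves hanging on the left of the lower of the two nodes it involves — so no infinite chain of rewritings exists. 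Consequently every class of the congruence contains at least one irreducible tree, whence $|\mathcal{O}(n)|$ is bounded above by the number of irreducible trees with $n$ leaves.

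It remains to count the irreducible trees, which are precisely those displaying none of the three left-hand patterns; equivalently, every internal node having an internal left child must be labelled $\FCatOpB$ with its left child labelled $\FCatOpA$, this being the only left-comb pattern that is not a redex. Writing $f_n$ for the number of such trees with $n$ leaves and $a_n$ for those whose root bears the label $\FCatOpA$, a decomposition at the root shows that a tree with root $\FCatOpA$ must have a leaf as left child, so $a_n = f_{n-1}$, while splitting the root-$\FCatOpB$ case according to whether the left child is a leaf or an internal ($\FCatOpA$-rooted) tree gives $f_n = 2 f_{n-1} + \sum_{j=1}^{n-2} f_j f_{n-1-j}$. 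With $f_1 = 1$, this is exactly the Catalan recurrence, so $f_n = C_n = |\FCat{1}(n)|$. Together with the surjectivity of $\psi$, this forces $|\mathcal{O}(n)| = |\FCat{1}(n)|$ in every arity and makes $\psi$ bijective, hence an isomorphism of non-symmetric operads.

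The step demanding the most care is the enumeration of the irreducible trees: one must isolate the single admissible left-comb pattern ($\FCatOpB$ over $\FCatOpA$), thread the corresponding constraint through the root decomposition, and recognise the Catalan recurrence. Note that confluence of the rewriting system is not needed for the argument; it even follows a posteriori, since each congruence class contains at least one irreducible tree while the irreducible trees and the classes are equinumerous (both counted by $C_n$), so each class contains exactly one. This unique normal form yields, as a bonus, an explicit bijection between the elements of $\FCat{1}$ and $1$-Dyck paths.
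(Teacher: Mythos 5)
Your proof is correct. There is nothing in the paper to compare it against: this is an extended abstract, and Theorem~\ref{thm:PresentationFCat1} is stated without proof. Your argument is the natural (and essentially canonical) one for such presentation results, and every step checks out: the identification $\FCatOpA = 01$, $\FCatOpB = 00$ and the verification of the three relations in $\T\EnsNat$ (e.g. $01 \circ_1 01 = 011 = 01 \circ_2 00$); the surjection $\psi$ from the universal property of the presentation; termination of the right-rotation rewriting via the potential $\Phi$, whose decrease is exactly the number of leaves of the left subtree of the lower node involved, hence positive; and the enumeration of irreducible trees, where the only admissible left-comb pattern is a $\FCatOpB$-node with an $\FCatOpA$-node as left child, giving $f_n = 2f_{n-1} + \sum_{j=1}^{n-2} f_j f_{n-1-j}$, $f_1 = 1$, which indeed produces $1, 2, 5, 14, 42, \dots$, matching $|\FCat{1}(n)|$ by Proposition~\ref{prop:MotsFCat}. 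The closing cardinality argument (a surjection between finite equinumerous sets is a bijection) is sound, as is your remark that confluence is not needed but follows a posteriori, yielding unique normal forms.
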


\subsection{Schröder trees}

Let~$\Schr$ be the non-symmetric suboperad of~$\T \EnsNat$ generated
by~$00$, $01$, and~$10$. One has the following characterization of the
elements of~$\Schr$:
\begin{Proposition} \label{prop:MotsSchr}
    The elements of~$\Schr$ are exactly the words~$x$ on the alphabet~$\EnsNat$
    that have at least one occurrence of~$0$ and, for all letter $\Lb \geq 1$
    of~$x$, there exists a letter $\La := \Lb - 1$ such that~$x$ has a
    factor $\La u \Lb$ or $\Lb u \La$ where~$u$ is a word consisting in
    letters~$\Lc$ satisfying $\Lc \geq \Lb$.
\end{Proposition}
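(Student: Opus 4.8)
Throughout, write $S$ for the set of words described in the statement, so that the task is to prove $\Schr = S$. The plan is to translate the operadic generation of $\Schr$ into elementary local moves on words over $\EnsNat$, and then to run a length (arity) induction. The key preliminary observation is that, thanks to the series associativity relation~(\ref{eq:AssocSerie}), the non-symmetric suboperad generated by a set $G$ is exactly the closure of the unit $\Unite = (0)$ under the single operations $z \mapsto z \circ_p g$ with $g \in G$ and $p \in [|z|]$: indeed, if $y = y' \circ_q g$ then $x \circ_p y = (x \circ_p y') \circ_{p+q-1} g$, so any composition reorganises into a sequence of right-graftings of generators. For $G = \{00,01,10\}$, recalling that $\circ_p$ in $\T\EnsNat$ adds $z_p$ to each letter of the grafted word, these three graftings are the local moves that replace the letter $z_p = c$ by $c\,c$ (for $00$), by $c\,(c+1)$ (for $01$), or by $(c+1)\,c$ (for $10$). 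Each of them creates a pair of adjacent letters $(c,d)$ with $|c-d|\le 1$; the inverse moves are the \emph{contractions}, which replace such an adjacent pair by the single letter $\min(c,d)$. Note that a contraction always deletes exactly one of the two letters and leaves the value of the surviving cell unchanged.

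For the inclusion $\Schr \subseteq S$, I would first check that $\Unite=(0)$ and the three generators $00$, $01$, $10$ lie in $S$ (each has an occurrence of $0$, and each letter $\geq 1$ is supported by its immediate neighbour), and then that $S$ is stable under the three grafting moves. The presence of a $0$ is clearly preserved. The only delicate point is that inserting a new letter must not destroy the supporting factor of an \emph{existing} letter: this holds because the inserted letter has value $c$ or $c+1$, which is at least every value whose witnessing valley $u$ could run through that position, so every existing factor $\La\,u\,\Lb$ survives (with $u$ possibly lengthened by one admissible letter), while the freshly created letter $\geq 1$ is supported by the neighbour it was split from. By the closure description above, this yields $\Schr \subseteq S$.

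For the converse $S \subseteq \Schr$, I would induct on the arity $n$. For $n=1$ the only element of $S$ is $\Unite$. For $n\ge 2$, set $b := \max x$. If $b=0$ then $x=0^n$, and contracting any two adjacent $0$'s yields $0^{n-1}\in S$. If $b\ge 1$, pick any occurrence of $b$; its supporting factor (with witness $\La=b-1$ and valley $u$ consisting of letters $\geq b$, hence equal to $b$) shows that, stepping toward $\La$, the adjacent cell is either a letter $b$ (the first letter of $u$) or the letter $b-1$ (if $u$ is empty). Either way we obtain a contractible adjacent pair, and contracting it deletes this single occurrence of $b$ while changing no other letter's value. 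Writing $z$ for the result, we get $x = z \circ_p g$ for the appropriate generator $g$, so it remains only to prove $z\in S$ in order to close the induction via $\Schr \subseteq S$ applied to $z$.

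Verifying $z \in S$ is the heart of the matter and the step I expect to be the main obstacle. The clean reason it works is that a letter equal to the global maximum $b$ can never serve as a target $\La = \Lb-1$ for another letter, since that would require a letter $\Lb = b+1 > b$; it can only appear \emph{inside} a valley word $u$ of some factor, where the constraint is merely $\Lc \geq \Lb$. Deleting such an interior valley letter only shortens $u$ while keeping its letters admissible, and never removes an endpoint $\La$ or $\Lb$ of a needed factor; meanwhile the deleted occurrence itself no longer imposes any condition, and no $0$ is touched because $b\geq 1$. Hence every surviving letter retains a valid supporting factor, so $z\in S$. Combining the two inclusions gives $\Schr = S$, which is the claim. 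The care to take is precisely in formalising "interior of the valley versus endpoint" and in treating uniformly the two symmetric shapes $\La\,u\,\Lb$ and $\Lb\,u\,\La$.
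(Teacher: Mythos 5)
The paper you were working against is an extended abstract: Proposition~\ref{prop:MotsSchr} is stated there without any proof, so there is no argument of record to compare yours to. Judged on its own terms, your proof is correct. The preliminary reduction — that the suboperad generated by $G$ is the closure of $\Unite = (0)$ under the single moves $z \mapsto z \circ_p g$, justified by relation~(\ref{eq:AssocSerie}) — is valid, and it correctly converts the problem into one about the three letter-splitting moves $c \mapsto cc$, $c \mapsto c(c+1)$, $c \mapsto (c+1)c$ and their inverse contractions. Both inclusions then go through: the set $S$ is stable under the moves, and every $x \in S$ of arity $n \geq 2$ admits a contraction, located by the supporting factor of an occurrence of $b := \max x$, whose result $z$ stays in $S$; your reason is the right one, namely that an occurrence of the maximal value can be neither the $\La$-endpoint of another occurrence's factor (that would force a letter $b+1$) nor its $\Lb$-endpoint (that endpoint is the supported occurrence itself), so it can only sit inside a valley $u$, where deletion is harmless. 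Note also that you implicitly read the condition per occurrence of $\Lb$ (the factor must end at that occurrence), which is indeed the reading under which the proposition is true: the weaker ``per value'' reading fails, for instance on $01202$, which contains the factors $01$ and $12$ yet cannot be contracted to $(0)$.

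Two spots in the write-up deserve repair, though neither affects the validity of the approach. First, in the inclusion $\Schr \subseteq S$, your stated justification (``the inserted letter is at least every value whose valley runs through that position, and the fresh letter is supported by the neighbour it was split from'') covers insertions interior to a valley and the moves $01$, $10$, but not two routine cases: for the move $00$ with $c \geq 1$, neither fresh copy of $c$ is supported by mere adjacency — both inherit the old factor of $c$, the farther copy by letting its twin (admissible, since $c \geq c$) enter the valley; and when a move is applied at the $\La$-endpoint of an existing factor, the factor survives but, again for the move $00$, only by taking the copy of $\La$ adjacent to $u$ as the new endpoint, since the outer copy would place the letter $\La = \Lb - 1$ inside the valley, which is not admissible. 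Second, your phrase ``close the induction via $\Schr \subseteq S$ applied to $z$'' is a slip: what closes the induction is the induction hypothesis $S \subseteq \Schr$ in arity $n - 1$ applied to $z$, followed by $x = z \circ_p g$ and the closure of $\Schr$ under grafting.
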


Recall that a \emph{Schröder tree} is a planar rooted tree such that no
node has exactly one child. The \emph{leaves} of a Schröder tree are the
nodes without children. We call \emph{sector} of a Schröder tree~$T$ a
triple $(x, i, j)$ consisting in a node~$x$ and two adjacent edges~$i$
and~$j$, where~$i$ is immediately on the left of~$j$. Proposition~\ref{prop:MotsSchr}
implies that we can regard the elements of arity~$n$ of~$\Schr$ as Schröder
trees with~$n$ leaves. Indeed, there is a bijection between words of~$\Schr$
and such trees. Given a Schröder tree~$T$, one computes an element of~$\Schr$
by labelling each sector $(x, i, j)$ of~$T$ by the depth of~$x$ and then,
by reading the labels from left to right. Figure~\ref{fig:BijSchrMots} shows
an example of this bijection.
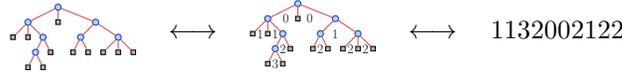
\begin{figure}[ht]
    \centering
    \scalebox{.2}{\raisebox{-6em}{\begin{tikzpicture}
        \node[NoeudSr](n1)at(0,0){};
        \node[NoeudSr](n2)at(-2,-1){};
        \node[NoeudSr](n3)at(-1,-2){};
        \node[NoeudSr](n4)at(-1.5,-3){};
        \node[NoeudSr](n5)at(2.5,-1){};
        \node[NoeudSr](n6)at(1.5,-2){};
        \node[NoeudSr](n7)at(4,-2){};
        \node[Feuille](f1)at(-3,-2){};
        \node[Feuille](f2)at(-2,-2){};
        \node[Feuille](f3)at(-2,-4){};
        \node[Feuille](f4)at(-1,-4){};
        \node[Feuille](f5)at(-.5,-3){};
        \node[Feuille](f6)at(0,-1){};
        \node[Feuille](f7)at(1,-3){};
        \node[Feuille](f8)at(2,-3){};
        \node[Feuille](f9)at(3,-3){};
        \node[Feuille](f10)at(4,-3){};
        \node[Feuille](f11)at(5,-3){};
        \draw[Arete](n1)--(n2);
        \draw[Arete](n2)--(n3);
        \draw[Arete](n3)--(n4);
        \draw[Arete](n1)--(n5);
        \draw[Arete](n5)--(n6);
        \draw[Arete](n5)--(n7);
        \draw[Arete](n1)--(f6);
        \draw[Arete](n2)--(f1);
        \draw[Arete](n2)--(f2);
        \draw[Arete](n3)--(f5);
        \draw[Arete](n4)--(f3);
        \draw[Arete](n4)--(f4);
        \draw[Arete](n6)--(f7);
        \draw[Arete](n6)--(f8);
        \draw[Arete](n7)--(f9);
        \draw[Arete](n7)--(f10);
        \draw[Arete](n7)--(f11);
    \end{tikzpicture}}}
    $\quad \longleftrightarrow \quad$
    \scalebox{.2}{\raisebox{-6em}{\begin{tikzpicture}
        \node[NoeudSr](n1)at(0,0){};
        \node[NoeudSr](n2)at(-2,-1){};
        \node[NoeudSr](n3)at(-1,-2){};
        \node[NoeudSr](n4)at(-1.5,-3){};
        \node[NoeudSr](n5)at(2.5,-1){};
        \node[NoeudSr](n6)at(1.5,-2){};
        \node[NoeudSr](n7)at(4,-2){};
        \node[Feuille](f1)at(-3,-2){};
        \node[Feuille](f2)at(-2,-2){};
        \node[Feuille](f3)at(-2,-4){};
        \node[Feuille](f4)at(-1,-4){};
        \node[Feuille](f5)at(-.5,-3){};
        \node[Feuille](f6)at(0,-1){};
        \node[Feuille](f7)at(1,-3){};
        \node[Feuille](f8)at(2,-3){};
        \node[Feuille](f9)at(3,-3){};
        \node[Feuille](f10)at(4,-3){};
        \node[Feuille](f11)at(5,-3){};
        \draw[Arete](n1)--(n2);
        \draw[Arete](n2)--(n3);
        \draw[Arete](n3)--(n4);
        \draw[Arete](n1)--(n5);
        \draw[Arete](n5)--(n6);
        \draw[Arete](n5)--(n7);
        \draw[Arete](n1)--(f6);
        \draw[Arete](n2)--(f1);
        \draw[Arete](n2)--(f2);
        \draw[Arete](n3)--(f5);
        \draw[Arete](n4)--(f3);
        \draw[Arete](n4)--(f4);
        \draw[Arete](n6)--(f7);
        \draw[Arete](n6)--(f8);
        \draw[Arete](n7)--(f9);
        \draw[Arete](n7)--(f10);
        \draw[Arete](n7)--(f11);
        \node[left of=f2,node distance=.5cm]{\Huge $1$};
        \node[right of=f2,node distance=.5cm]{\Huge $1$};
        \node[right of=f3,node distance=.5cm]{\Huge $3$};
        \node[left of=f5,node distance=.5cm]{\Huge $2$};
        \node[left of=f6,node distance=.8cm]{\Huge $0$};
        \node[right of=f6,node distance=.8cm]{\Huge $0$};
        \node[right of=f7,node distance=.5cm]{\Huge $2$};
        \node[right of=n6,node distance=1cm]{\Huge $1$};
        \node[right of=f9,node distance=.5cm]{\Huge $2$};
        \node[right of=f10,node distance=.5cm]{\Huge $2$};
    \end{tikzpicture}}}
    $\quad \longleftrightarrow \quad 1132002122$
    \caption{A Schröder tree and the corresponding element of the
    non-symmetric operad~$\Schr$.}
    \label{fig:BijSchrMots}
\end{figure}

Let us respectively denote by~$\SchrOpA$,~$\SchrOpB$, and~$\SchrOpC$ the
generators~$00$, $01$, and~$10$ of~$\Schr$.
\begin{Proposition} \label{prop:RelationsGenSchr}
    The generators~$\SchrOpA$, $\SchrOpB$, and~$\SchrOpC$ of~$\Schr$
    are subject, in degree~$2$, to the seven relations \vspace{-.8em}

    \begin{minipage}[c]{.4\linewidth}
        \begin{align}
            \SchrOpA \circ_1 \SchrOpA & = \SchrOpA \circ_2 \SchrOpA, \\
            \SchrOpB \circ_1 \SchrOpC & = \SchrOpC \circ_2 \SchrOpB, \\
            \SchrOpA \circ_1 \SchrOpB & = \SchrOpA \circ_2 \SchrOpC, \\
            \SchrOpB \circ_1 \SchrOpA & = \SchrOpA \circ_2 \SchrOpB,
        \end{align}
    \end{minipage} \hfill
    \begin{minipage}[c]{.4\linewidth}
        \begin{align}
            \SchrOpA \circ_1 \SchrOpC & = \SchrOpC \circ_2 \SchrOpA, \\
            \SchrOpB \circ_1 \SchrOpB & = \SchrOpB \circ_2 \SchrOpA, \\
            \SchrOpC \circ_1 \SchrOpA & = \SchrOpC \circ_2 \SchrOpC.
        \end{align}
    \end{minipage}
\end{Proposition}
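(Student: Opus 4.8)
The plan is to read each of the seven relations as an equality between two elements of $\Schr(3) \subseteq \T\EnsNat(3)$, each obtained by composing two of the arity-$2$ generators, and to establish these equalities by direct computation. The only tool needed is the substitution rule of $\T\EnsNat$, which for the additive monoid $\EnsNat$ specializes to $x \circ_i y = (x_1, \dots, x_{i-1}, x_i + y_1, \dots, x_i + y_m, x_{i+1}, \dots, x_n)$. Recalling that $\SchrOpA = 00$, $\SchrOpB = 01$, and $\SchrOpC = 10$, I would evaluate both sides of each relation as a word of length $3$ and confirm that they coincide.

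For instance, $\SchrOpB \circ_1 \SchrOpB = 01 \circ_1 01 = 011$ (substituting into the first letter $0$), while $\SchrOpB \circ_2 \SchrOpA = 01 \circ_2 00 = 011$ (substituting into the letter $1$, whence the two trailing $1$s); likewise $\SchrOpC \circ_1 \SchrOpA = 10 \circ_1 00 = 110 = 10 \circ_2 10 = \SchrOpC \circ_2 \SchrOpC$. The remaining five relations are checked identically, the two composites collapsing respectively to $000$, $101$, $010$, $001$, and $100$. Hence all seven relations hold.

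To justify that these are \emph{all} the relations carried in degree $2$, I would enumerate the $3 \times 3 \times 2 = 18$ composites $g \circ_i h$ with $g, h \in \{\SchrOpA, \SchrOpB, \SchrOpC\}$ and $i \in [2]$, record the arity-$3$ word each produces, and group them by value. A short computation shows that these $18$ composites realize exactly $11$ distinct words: four of them ($012$, $021$, $120$, $210$) arise from a single composite each, and the other seven words arise from exactly one coinciding pair each. Since $\Schr(3)$ has dimension $11$ (see Table~\ref{tab:Operades}), the seven coincidences are precisely the seven listed relations, and no further identification occurs in this degree.

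I expect no genuine obstacle in the verification itself: once the monoid product is read as integer addition, each relation is settled in a single line. The only point deserving care is the completeness bookkeeping, namely confirming that the $18$ composites split into exactly $11$ classes and that the seven relations form an independent set of degree-$2$ identifications; but this is a finite, mechanical check rather than a conceptual difficulty.
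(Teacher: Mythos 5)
Your verification is correct: the paper, being an extended abstract, states Proposition~\ref{prop:RelationsGenSchr} without proof, and your direct computation in $\T\EnsNat$ (using $x \circ_i y = (x_1, \dots, x_{i-1}, x_i + y_1, \dots, x_i + y_m, x_{i+1}, \dots, x_n)$ with $\SchrOpA = 00$, $\SchrOpB = 01$, $\SchrOpC = 10$) is exactly the intended argument; all seven word equalities check out. Your completeness bookkeeping is also sound: every arity-$3$ element of $\Schr$ is a composite $g \circ_i h$ of two generators, the $18$ such composites yield exactly $11$ distinct words ($7$ coinciding pairs plus the $4$ singletons $012$, $021$, $120$, $210$), and the $7$ pairs are precisely the listed relations, so no degree-$2$ identification is missing.
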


\subsection{Motzkin paths}

Let~$\Motz$ be the non-symmetric suboperad of~$\T \EnsNat$ generated
by~$00$ and~$010$. Since~$00$ and~$01$ generate~$\FCat{1}$ and since
$010 = 00 \circ_1 01$, $\Motz$ is a non-symmetric suboperad of~$\FCat{1}$.
One has the following characterization of the elements of~$\Motz$:
\begin{Proposition} \label{prop:ElemMotz}
    The elements of~$\Motz$ are exactly the words~$x$ on the alphabet~$\EnsNat$
    that begin and start by~$0$ and such that $|x_i - x_{i + 1}| \leq 1$
    for all $i \in [|x| - 1]$.
\end{Proposition}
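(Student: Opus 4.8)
Write $\mathcal{M}$ for the set of words described in the statement, \emph{i.e.}, the $x \in \T \EnsNat$ with $x_1 = x_{|x|} = 0$ and $|x_i - x_{i+1}| \le 1$ for all $i \in [|x| - 1]$ (reading ``begin and start'' as ``begin and end''). The plan is to establish the two inclusions $\Motz \subseteq \mathcal{M}$ and $\mathcal{M} \subseteq \Motz$ separately.

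For $\Motz \subseteq \mathcal{M}$, I would check that $\mathcal{M}$ is itself a non-symmetric suboperad of $\T \EnsNat$ containing the generators; since $\Motz$ is the smallest such suboperad, the inclusion follows. The unit $(0)$ and the generators $00$ and $010$ plainly lie in $\mathcal{M}$, so it remains to verify stability under the partial compositions. Given $x, y \in \mathcal{M}$ of arities $n$ and $m$ and $i \in [n]$, recall that $x \circ_i y = (x_1, \dots, x_{i-1}, x_i + y_1, \dots, x_i + y_m, x_{i+1}, \dots, x_n)$. The crucial point is that $y_1 = y_m = 0$: this makes the grafted block begin and end with the value $x_i$, so the two junctions reduce to the already-valid gaps $|x_{i-1} - x_i| \le 1$ and $|x_i - x_{i+1}| \le 1$, while inside the block $|(x_i + y_j) - (x_i + y_{j+1})| = |y_j - y_{j+1}| \le 1$; the first and last letters of $x \circ_i y$ stay equal to $0$ for the same reason. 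Hence $\mathcal{M}$ is stable under $\circ_i$.

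For $\mathcal{M} \subseteq \Motz$, I would argue by induction on the arity $|x| = n$. The cases $n = 1$ (where $x = (0) = \Unite$) and $n = 2$ (where $x = 00$) are immediate. For $n \ge 3$, since $x_1 = 0$ and $|x_1 - x_2| \le 1$, the second letter satisfies $x_2 \in \{0, 1\}$, and I would split accordingly. If $x_2 = 0$, then $x = 00 \circ_2 y$ with $y := (x_2, \dots, x_n) \in \mathcal{M}$ of arity $n - 1$. If $x_2 = 1$, let $k$ be the least index $> 1$ with $x_k = 0$ (it exists because $x_n = 0$); the down-step bound forces $x_{k-1} = 1$, so that $y := (x_2 - 1, \dots, x_{k-1} - 1)$ and $z := (x_k, \dots, x_n)$ both lie in $\mathcal{M}$, with respective arities $k - 2$ and $n - k + 1$, both strictly less than $n$. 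One then checks the factorization $x = (010 \circ_2 y) \circ_k z$ directly from the composition formula, using $y_1 = y_{k-2} = 0$ and $x_k = 0$. In every case $x$ is a composition of elements of $\mathcal{M}$ of smaller arity, which belong to $\Motz$ by the induction hypothesis, so $x \in \Motz$.

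The routine verifications (that the subwords defined above satisfy the three defining conditions of $\mathcal{M}$, and that the two displayed factorizations hold) are straightforward from the definition of $\circ_i$ together with $\bullet = +$ and $1 = 0$. I expect the only genuinely delicate step to be the reverse inclusion in the case $x_2 = 1$: one must locate the first return of the associated path to height $0$ and argue that both the ``excursion'' factor $y$ and the ``remainder'' factor $z$ again satisfy the endpoint condition $y_1 = y_{|y|} = 0$ and $z_1 = z_{|z|} = 0$, which is needed both to keep them inside $\mathcal{M}$ and to make the grafting at the junction letter seamless. It is worth noting that the inclusion $\Motz \subseteq \FCat{1}$ recorded before the statement already gives $x_1 = 0$ and $x_{i+1} \le x_i + 1$ for free, so the content specific to $\Motz$ is exactly the symmetric down-step bound $x_{i+1} \ge x_i - 1$ together with $x_{|x|} = 0$.
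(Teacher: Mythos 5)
Your proof is correct. There is nothing in the paper to compare it against: this is an extended abstract, and Proposition~\ref{prop:ElemMotz} (like all the statements of Section~\ref{sec:Exemples}) is given without proof, so your argument stands on its own; it is also the natural one. The first inclusion is handled exactly right: since $\Motz$ is by definition the smallest non-symmetric suboperad of $\T \EnsNat$ containing $00$ and $010$, it suffices that your set $\mathcal{M}$ contain the unit and these generators and be closed under the maps $\circ_i$, and the key point is indeed that $y_1 = y_m = 0$ makes the grafted block begin and end at the value $x_i$, so every junction condition (including the boundary cases $i = 1$ and $i = n$, which preserve the endpoint zeros) reduces to a condition already satisfied by $x$ or $y$. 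The converse by induction on the arity, splitting on $x_2 \in \{0, 1\}$ and, when $x_2 = 1$, cutting at the first return $k$ to the letter $0$, is also sound; the factorization $x = (010 \circ_2 y) \circ_k z$ checks out against the composition formula with $\bullet = +$, and the arities $|y| = k - 2$ and $|z| = n - k + 1$ are both strictly smaller than $n$ since $3 \leq k \leq n$. One small point you should make explicit: membership of $y = (x_2 - 1, \dots, x_{k-1} - 1)$ in $\T \EnsNat$ requires $x_j \geq 1$ for all $1 < j < k$, which is precisely the minimality of $k$; you invoke that minimality to force $x_{k-1} = 1$, but never state that it is also what keeps the letters of $y$ nonnegative, and this is part of what must be verified since the words live on the alphabet $\EnsNat$.
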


Recall that a Motzkin path of length~$n$ is a path in~$\EnsNat^2$ connecting
the points $(0, 0)$ and $(n, 0)$, and consisting in \emph{up steps} $(1, 1)$,
\emph{down steps} $(1, -1)$, and \emph{stationary steps} $(1, 0)$.
Proposition~\ref{prop:ElemMotz} implies that we can regard the elements
of arity~$n$ of~$\Motz$ as Motzkin paths of length $n - 1$. Indeed there
is a bijection between words of~$\Motz$ and such paths. Given a Motzkin
path~$P$, one computes an element of~$\Motz$ by writing for each point~$p$
of~$P$ the ordinate of~$p$, and then, by reading these labels from left
to right. Figure~\ref{fig:BijMotzMots} shows an example of this bijection.
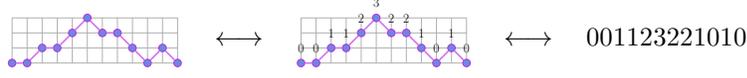
\begin{figure}[ht]
    \centering
    \scalebox{.2}{\raisebox{-4em}{\begin{tikzpicture}
        \draw[Grille] (0,0) grid (11,3);
        \node[NoeudDyck](0)at(0,0){};
        \node[NoeudDyck](1)at(1,0){};
        \node[NoeudDyck](2)at(2,1){};
        \node[NoeudDyck](3)at(3,1){};
        \node[NoeudDyck](4)at(4,2){};
        \node[NoeudDyck](5)at(5,3){};
        \node[NoeudDyck](6)at(6,2){};
        \node[NoeudDyck](7)at(7,2){};
        \node[NoeudDyck](8)at(8,1){};
        \node[NoeudDyck](9)at(9,0){};
        \node[NoeudDyck](10)at(10,1){};
        \node[NoeudDyck](11)at(11,0){};
        \draw[PasDyck](0)--(1);
        \draw[PasDyck](1)--(2);
        \draw[PasDyck](2)--(3);
        \draw[PasDyck](3)--(4);
        \draw[PasDyck](4)--(5);
        \draw[PasDyck](5)--(6);
        \draw[PasDyck](6)--(7);
        \draw[PasDyck](7)--(8);
        \draw[PasDyck](8)--(9);
        \draw[PasDyck](9)--(10);
        \draw[PasDyck](10)--(11);
    \end{tikzpicture}}}
    $\quad \longleftrightarrow \quad$
    \scalebox{.2}{\raisebox{-4em}{\begin{tikzpicture}
        \draw[Grille] (0,0) grid (11,3);
        \node[NoeudDyck](0)at(0,0){};
        \node[NoeudDyck](1)at(1,0){};
        \node[NoeudDyck](2)at(2,1){};
        \node[NoeudDyck](3)at(3,1){};
        \node[NoeudDyck](4)at(4,2){};
        \node[NoeudDyck](5)at(5,3){};
        \node[NoeudDyck](6)at(6,2){};
        \node[NoeudDyck](7)at(7,2){};
        \node[NoeudDyck](8)at(8,1){};
        \node[NoeudDyck](9)at(9,0){};
        \node[NoeudDyck](10)at(10,1){};
        \node[NoeudDyck](11)at(11,0){};
        \draw[PasDyck](0)--(1);
        \draw[PasDyck](1)--(2);
        \draw[PasDyck](2)--(3);
        \draw[PasDyck](3)--(4);
        \draw[PasDyck](4)--(5);
        \draw[PasDyck](5)--(6);
        \draw[PasDyck](6)--(7);
        \draw[PasDyck](7)--(8);
        \draw[PasDyck](8)--(9);
        \draw[PasDyck](9)--(10);
        \draw[PasDyck](10)--(11);
        \node[above of=0]{\Huge $0$};
        \node[above of=1]{\Huge $0$};
        \node[above of=2]{\Huge $1$};
        \node[above of=3]{\Huge $1$};
        \node[above of=4]{\Huge $2$};
        \node[above of=5]{\Huge $3$};
        \node[above of=6]{\Huge $2$};
        \node[above of=7]{\Huge $2$};
        \node[above of=8]{\Huge $1$};
        \node[above of=9]{\Huge $0$};
        \node[above of=10]{\Huge $1$};
        \node[above of=11]{\Huge $0$};
    \end{tikzpicture}}}
    $\quad \longleftrightarrow \quad 001123221010$
    \caption{A Motzkin path and the corresponding element of the
    non-symmetric operad~$\Motz$.}
    \label{fig:BijMotzMots}
\end{figure}

Let us respectively denote by~$\MotzOpA$ and~$\MotzOpB$ the generators~$00$
and~$010$ of~$\Motz$.
\begin{Proposition} \label{prop:RelationsGenMotz}
    The generators~$\MotzOpA$ and~$\MotzOpB$ of~$\Motz$ are subject,
    in degree~$2$, to the four relations \vspace{-.8em}

    \begin{minipage}[c]{.4\linewidth}
        \begin{align}
            \MotzOpA \circ_1 \MotzOpA & = \MotzOpA \circ_2 \MotzOpA, \\
            \MotzOpB \circ_1 \MotzOpA & = \MotzOpA \circ_2 \MotzOpB,
        \end{align}
    \end{minipage} \hfill
    \begin{minipage}[c]{.4\linewidth}
        \begin{align}
            \MotzOpA \circ_1 \MotzOpB & = \MotzOpB \circ_3 \MotzOpA, \\
            \MotzOpB \circ_1 \MotzOpB & = \MotzOpB \circ_3 \MotzOpB.
        \end{align}
    \end{minipage}
\end{Proposition}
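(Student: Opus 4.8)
The plan is to verify the four relations by direct computation inside the operad~$\T \EnsNat$, of which~$\Motz$ is a suboperad. Recall that in~$\T \EnsNat$ the partial composition is given by equation~(\ref{eq:TSub}) with~$\bullet$ the addition of integers, so that~$x \circ_i y$ is obtained by replacing the~$i$th letter~$x_i$ of~$x$ by the block $x_i + y_1, \dots, x_i + y_m$ and leaving the other letters unchanged. The two generators are the words~$\MotzOpA = 00$, of arity~$2$, and~$\MotzOpB = 010$, of arity~$3$. Since each composite appearing in the statement is a composition of elements of~$\Motz$, both sides of every relation automatically lie in~$\Motz$, so it suffices to check that the two sides produce the \emph{same} word of~$\T \EnsNat$.

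First I would treat each relation by substituting into~(\ref{eq:TSub}). For the first relation, inserting~$00$ into either letter of~$00$ yields only sums of zeros, so both $\MotzOpA \circ_1 \MotzOpA$ and $\MotzOpA \circ_2 \MotzOpA$ give the word~$000$. For the second, $\MotzOpB \circ_1 \MotzOpA$ replaces the first letter~$0$ of~$010$ by the block $0{+}0,\, 0{+}0$, giving~$0010$, while $\MotzOpA \circ_2 \MotzOpB$ replaces the second letter~$0$ of~$00$ by the block $0{+}0,\, 0{+}1,\, 0{+}0$, again giving~$0010$. The third relation is handled symmetrically: both $\MotzOpA \circ_1 \MotzOpB$ and $\MotzOpB \circ_3 \MotzOpA$ equal~$0100$, the common value arising because the nontrivial letter~$1$ is grafted against a~$0$ in each case. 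Finally, both $\MotzOpB \circ_1 \MotzOpB$ and $\MotzOpB \circ_3 \MotzOpB$ expand to~$01010$, since in each case the grafting occurs at a letter equal to~$0$ and hence leaves the inner word~$010$ unchanged.

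The only care required is bookkeeping: one must track the insertion index~$i$, the arities~$n = |x|$ and~$m = |y|$, and the shift by which the letters past the insertion point are displaced, so as to align the two resulting words position by position. There is no genuine obstacle here; the additive structure of~$\EnsNat$ together with the fact that the left letter of every grafting site is~$0$ makes all the shifts transparent. I would present the four computations compactly, displaying each equality of words explicitly as above, and conclude that~$\MotzOpA$ and~$\MotzOpB$ satisfy the claimed relations in degree~$2$.
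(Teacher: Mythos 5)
Your computations are correct, and this is exactly the verification the paper intends (the proof is omitted as routine): each identity follows from applying the substitution rule~(\ref{eq:TSub}) in~$\T \EnsNat$ to the words~$00$ and~$010$, and your four output words~$000$, $0010$, $0100$, $01010$ are all right. One small addition would make the proof complete under the stronger reading of ``subject, in degree~$2$, to the four relations'' (namely, that these are the \emph{only} degree-$2$ relations): compute the two remaining composites $\MotzOpB \circ_2 \MotzOpA = 0110$ and $\MotzOpB \circ_2 \MotzOpB = 01210$ and observe that the six words obtained are pairwise distinct, so no further identifications occur among degree-$2$ compositions.
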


\subsection{Integer compositions}

Let~$\Comp$ be the non-symmetric suboperad of~$\T \EnsNat_2$ generated
by~$00$ and~$01$. Since~$\FCat{1}$ is the non-symmetric suboperad of~$\T \EnsNat$
generated by~$00$ and~$01$, and since~$\T \EnsNat_2$ is a quotient of~$\T \EnsNat$,
$\Comp$ is a quotient of~$\FCat{1}$. One has the following characterization
of the elements of~$\Comp$~:
\begin{Proposition} \label{prop:ElemComp}
    The elements of~$\Comp$ are exactly the words on the alphabet~$\{0, 1\}$
    that begin by~$0$.
\end{Proposition}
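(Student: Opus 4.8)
\section*{Proof proposal}

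The plan is to prove the two inclusions separately. For the inclusion of $\Comp$ into the set of $\{0,1\}$-words beginning with $0$, I would first observe that every element of $\Comp$ lives in $\T \EnsNat_2$ and is therefore a word over the alphabet $\{0,1\}$; it then remains to show that \emph{beginning with $0$} is an invariant of the whole suboperad. The two generators $00$ and $01$, as well as the unit $(0)$ of $\Comp$, all begin with $0$, so by structural induction on the number of substitutions it suffices to check that the substitution maps preserve this property. If $x$ and $y$ both begin with $0$, then by~(\ref{eq:TSub}) the first letter of $x \circ_i y$ equals $x_1 \bullet y_1 = 0 \bullet 0 = 0$ when $i = 1$, and equals $x_1 = 0$ when $i > 1$; in either case $x \circ_i y$ begins with $0$. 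This settles the first inclusion.

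For the reverse inclusion I would exploit the following elementary computation in $\T \EnsNat_2$. For every word $y = (y_1, \dots, y_m)$, composing the generators at their first input and using~(\ref{eq:TSub}) together with $0 \bullet y_j = y_j$ gives $00 \circ_1 y = (y_1, \dots, y_m, 0)$ and $01 \circ_1 y = (y_1, \dots, y_m, 1)$. In words, composing at position $1$ with $00$ (resp. $01$) simply appends a $0$ (resp. a $1$) at the right end of $y$, while leaving $y$ itself unchanged. This append operation is the engine of the whole generation.

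Equipped with this, I would prove by induction on the length $n$ that every $\{0,1\}$-word $x = x_1 \dots x_n$ with $x_1 = 0$ belongs to $\Comp$. For $n = 1$ the only such word is $(0)$, which is the unit of $\Comp$. For $n \geq 2$, the prefix $x_1 \dots x_{n-1}$ again begins with $0$ and has length $n - 1$, hence lies in $\Comp$ by the induction hypothesis; appending the last letter $x_n$ by forming $00 \circ_1 (x_1 \dots x_{n-1})$ if $x_n = 0$, or $01 \circ_1 (x_1 \dots x_{n-1})$ if $x_n = 1$, produces exactly $x$ and keeps us inside the suboperad $\Comp$. This completes the induction, and the two inclusions together give the statement.

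I do not expect a serious obstacle; the one point to get right is the modulo-$2$ arithmetic that makes the append trick work, namely that $0 \bullet y_j = y_j$ lets the substituted block reproduce $y$ verbatim while the surviving fixed letter of the generator ($0$ or $1$) lands at the right end. As a consistency check, the number of $\{0,1\}$-words of length $n$ beginning with $0$ is $2^{n-1}$, in agreement with the announced first dimensions $1, 2, 4, 8, 16, 32$ of $\Comp$.
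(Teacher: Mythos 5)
Your proof is correct and complete: the forward inclusion follows from your structural induction (the generators $00$, $01$ and the unit $(0)$ all begin with $0$, and substitution preserves this since the first letter of $x \circ_i y$ is $x_1 \bullet y_1 = 0$ when $i = 1$ and $x_1 = 0$ otherwise), and the reverse inclusion from the append identities $00 \circ_1 (y_1, \dots, y_m) = (y_1, \dots, y_m, 0)$ and $01 \circ_1 (y_1, \dots, y_m) = (y_1, \dots, y_m, 1)$ combined with induction on the length. The paper, being an extended abstract, states Proposition~\ref{prop:ElemComp} without any proof, so there is nothing to compare against; your argument is the natural one, and you correctly anchor the base case in the convention that the suboperad~$\Comp$ contains the unit~$(0)$, which is consistent with the first dimension~$1$ listed for~$\Comp$ in Table~\ref{tab:Operades}.
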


Proposition~\ref{prop:ElemComp} implies that we can regard the elements
of arity~$n$ of~$\Comp$ as integer compositions of~$n$. Indeed, there is a
bijection between words of~$\Comp$ and integer compositions. Given a
composition $C := (C_1, C_2, \dots, C_\ell)$, one computes the following
element of~$\Comp$:
\begin{equation}
    01^{C_1 - 1} 01^{C_2 - 1} \dots 01^{C_\ell - 1}.
\end{equation}

Encoding integer compositions by ribbon diagrams offers an alternative
way to compute the substitution in~$\Comp$:
\begin{Proposition} \label{prop:SubsComp}
    Let~$C$ and~$D$ be two ribbon diagrams, $i$ be an integer, and~$c$ be
    the $i$th visited box of~$C$ by scanning it from up to down and from
    left to right. Then, the substitution $C \circ_i D$ in~$\Comp$
    returns to replace~$c$ by~$D$ if~$c$ is the upper box of its column,
    or to replace~$c$ by the transpose ribbon diagram of~$D$ otherwise.
\end{Proposition}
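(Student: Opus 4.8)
The plan is to reduce the statement to the substitution formula~(\ref{eq:TSub}) of~$\T \EnsNat_2$ through the dictionary between integer compositions, words of~$\Comp$, and ribbon diagrams. First I would fix the convention that makes the bijection of Proposition~\ref{prop:ElemComp} compatible with ribbons, so that scanning a ribbon~$C$ from top to bottom and from left to right visits its boxes in the same order as the letters of the associated word~$x$; thus the $i$th visited box~$c$ carries the letter~$x_i$. The crucial observation is then that, along this scan, moving to the next box is a step to the right exactly when a new part begins (letter~$0$) and a step downwards when a part is continued (letter~$1$). Consequently a box has nothing directly above it precisely when it is the initial box or is entered by a rightward step, that is, precisely when its letter is~$0$; equivalently, \emph{$c$ is the upper box of its column if and only if $x_i = 0$.}

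Next I would record the elementary dictionary lemma that transposing a ribbon amounts, on words, to keeping the first letter~$0$ and flipping every other letter, i.e. adding~$1$ modulo~$2$ to $x_2, \dots, x_n$. This is immediate from the previous paragraph: reflecting a ribbon across its main diagonal exchanges rightward and downward steps, hence exchanges the letters~$0$ and~$1$ carried by all boxes but the first.

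With these two facts the proof becomes a direct computation on~(\ref{eq:TSub}). Writing $x$ and $y$ for the words of~$C$ and~$D$ and $m := |y|$, the factor inserted at position~$i$ of $x \circ_i y$ is $(x_i \bullet y_1, \dots, x_i \bullet y_m)$, whose letters are $z_j := x_i \bullet y_j$, that is $x_i + y_j$ modulo~$2$. Its first letter equals $x_i$, so the inserted factor is attached to its neighbours by the same incoming and outgoing steps as the replaced box~$c$; in other words it is genuinely spliced in place of~$c$, reconnecting the edges of~$c$ to the first and last boxes of the factor. Its internal shape is governed by $z_2, \dots, z_m$: if $x_i = 0$ then $z_j = y_j$, so the inserted factor has the internal shape of~$D$; if $x_i = 1$ then $z_j = \overline{y_j}$, so by the dictionary lemma it has the internal shape of the transpose of~$D$. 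Combining this case distinction with the characterization of the first paragraph — $c$ is the upper box of its column exactly when $x_i = 0$ — yields precisely the stated rule: $C \circ_i D$ replaces~$c$ by~$D$ when~$c$ is the upper box of its column, and by the transpose ribbon diagram of~$D$ otherwise.

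The only genuine work lies in fixing the geometric conventions of the first two paragraphs, namely the compatibility of the scan with the word and the step/transpose dictionary; once these are established, the remainder is the routine modulo~$2$ case analysis sketched above.
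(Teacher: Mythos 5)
Your proof is correct. There is, however, nothing in the paper to compare it against: this is an extended abstract, and Proposition~\ref{prop:SubsComp}, like all the statements of Section~\ref{sec:Exemples}, is given without proof, the only supporting evidence being Figure~\ref{fig:SubsComp}. What can be checked is that your conventions agree with the paper's, and they do: under your dictionary (letter $0$ = first box of a part = box entered by a step to the right, letter $1$ = continuation = box entered by a downward step), the left-hand ribbon of Figure~\ref{fig:SubsComp} encodes the word $010011010$, and the two pictured substitutions ($\circ_4$, where $x_4 = 0$ and $D$ is inserted unchanged, and $\circ_5$, where $x_5 = 1$ and the transpose of $D$ is inserted) reproduce exactly your mod-$2$ computation on~(\ref{eq:TSub}); with the opposite, rows-as-parts convention neither the figure nor the statement would hold, so pinning down the convention, as you do, is genuinely part of the proof and not a cosmetic preliminary. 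The one point I would tighten is your claim that a box has a box directly above it only when it is entered by a downward step, which you call immediate: it deserves the one-line justification that a ribbon is a monotone right/down path, so a box lying directly above $b_i$ could only be $b_{i-1}$ --- any other box of the path is excluded because the path never moves left or up, and its presence would create a forbidden $2 \times 2$ square. With that line added, the remaining steps (the inserted factor attaches like the deleted box because $z_1 = x_i$ and $x_{i+1}$ is unchanged, its internal shape is governed by $z_j = x_i + y_j \bmod 2$, and flipping all letters but the first is transposition) form a complete and correct argument.
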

Figure~\ref{fig:SubsComp} shows two examples of substitution in~$\Comp$.
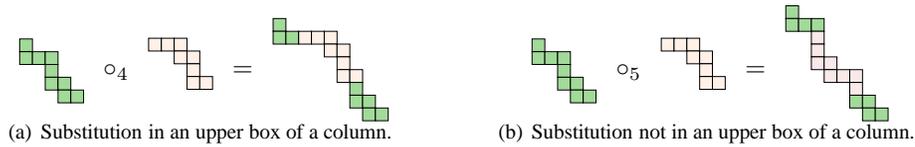
\begin{figure}[ht]
    \centering
    \subfigure[Substitution in an upper box of a column.]{\makebox[.45\textwidth]{
    \scalebox{.17}{\raisebox{-6em}{\begin{tikzpicture}
        \node[Boite]at(0,0){};
        \node[Boite]at(0,-1){};
        \node[Boite]at(1,-1){};
        \node[Boite]at(2,-1){};
        \node[Boite]at(2,-2){};
        \node[Boite]at(2,-3){};
        \node[Boite]at(3,-3){};
        \node[Boite]at(3,-4){};
        \node[Boite]at(4,-4){};
    \end{tikzpicture}}}
    $\enspace \circ_4 \enspace$
    \scalebox{.17}{\raisebox{-3em}{\begin{tikzpicture}
        \node[Boite2]at(0,0){};
        \node[Boite2]at(1,0){};
        \node[Boite2]at(2,0){};
        \node[Boite2]at(2,-1){};
        \node[Boite2]at(3,-1){};
        \node[Boite2]at(3,-2){};
        \node[Boite2]at(3,-3){};
        \node[Boite2]at(4,-3){};
    \end{tikzpicture}}}
    $\enspace = \enspace$
    \scalebox{.17}{\raisebox{-10em}{\begin{tikzpicture}
        \node[Boite]at(0,0){};
        \node[Boite]at(0,-1){};
        \node[Boite]at(1,-1){};
        \node[Boite2]at(2,-1){};
        \node[Boite2]at(3,-1){};
        \node[Boite2]at(4,-1){};
        \node[Boite2]at(4,-2){};
        \node[Boite2]at(5,-2){};
        \node[Boite2]at(5,-3){};
        \node[Boite2]at(5,-4){};
        \node[Boite2]at(6,-4){};
        \node[Boite]at(6,-5){};
        \node[Boite]at(6,-6){};
        \node[Boite]at(7,-6){};
        \node[Boite]at(7,-7){};
        \node[Boite]at(8,-7){};
    \end{tikzpicture}}}}}
    \subfigure[Substitution not in an upper box of a column.]{\makebox[.45\textwidth]{
    \scalebox{.17}{\raisebox{-6em}{\begin{tikzpicture}
        \node[Boite]at(0,0){};
        \node[Boite]at(0,-1){};
        \node[Boite]at(1,-1){};
        \node[Boite]at(2,-1){};
        \node[Boite]at(2,-2){};
        \node[Boite]at(2,-3){};
        \node[Boite]at(3,-3){};
        \node[Boite]at(3,-4){};
        \node[Boite]at(4,-4){};
    \end{tikzpicture}}}
    $\enspace \circ_5 \enspace$
    \scalebox{.17}{\raisebox{-3em}{\begin{tikzpicture}
        \node[Boite2]at(0,0){};
        \node[Boite2]at(1,0){};
        \node[Boite2]at(2,0){};
        \node[Boite2]at(2,-1){};
        \node[Boite2]at(3,-1){};
        \node[Boite2]at(3,-2){};
        \node[Boite2]at(3,-3){};
        \node[Boite2]at(4,-3){};
    \end{tikzpicture}}}
    $\enspace = \enspace$
    \scalebox{.17}{\raisebox{-10em}{\begin{tikzpicture}
        \node[Boite]at(0,0){};
        \node[Boite]at(0,-1){};
        \node[Boite]at(1,-1){};
        \node[Boite]at(2,-1){};
        \node[Boite3]at(2,-2){};
        \node[Boite3]at(2,-3){};
        \node[Boite3]at(2,-4){};
        \node[Boite3]at(3,-4){};
        \node[Boite3]at(3,-5){};
        \node[Boite3]at(4,-5){};
        \node[Boite3]at(5,-5){};
        \node[Boite3]at(5,-6){};
        \node[Boite]at(5,-7){};
        \node[Boite]at(6,-7){};
        \node[Boite]at(6,-8){};
        \node[Boite]at(7,-8){};
    \end{tikzpicture}}}}}
    \caption{Two examples of substitutions in the non-symmetric operad~$\Comp$.}
    \label{fig:SubsComp}
\end{figure}

\begin{Theoreme} \label{thm:PresentationComp}
    The non-symmetric operad~$\Comp$ is the free non-symmetric operad generated
    by two elements~$\CompOpA$ and~$\CompOpB$ of arity~$2$, subject to the
    four relations \vspace{-.8em}

    \begin{minipage}[c]{.4\linewidth}
        \begin{align}
            \CompOpA \circ_1 \CompOpA & = \CompOpA \circ_2 \CompOpA, \\
            \CompOpB \circ_1 \CompOpA & = \CompOpA \circ_2 \CompOpB,
        \end{align}
    \end{minipage} \hfill
    \begin{minipage}[c]{.4\linewidth}
        \begin{align}
            \CompOpB \circ_1 \CompOpB & = \CompOpB \circ_2 \CompOpA, \\
            \CompOpA \circ_1 \CompOpB & = \CompOpB \circ_2 \CompOpB.
        \end{align}
    \end{minipage}
\end{Theoreme}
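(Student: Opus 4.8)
The plan is to realize $\Comp$ as a quotient of a free non-symmetric operad and to bound the size of that quotient by a rewriting argument. Let $\mathcal{F}$ be the free non-symmetric operad generated by two elements of arity~$2$, which we denote as in the statement by $\CompOpA$ and $\CompOpB$, and let $\equiv$ be the smallest operad congruence on $\mathcal{F}$ containing the four relations. Sending $\CompOpA$ to the word~$00$ and $\CompOpB$ to the word~$01$ of $\Comp$ defines a morphism $\phi : \mathcal{F} \to \Comp$ of non-symmetric operads, which is surjective since by definition $\Comp$ is generated by~$00$ and~$01$. A direct computation in $\T \EnsNat_2$ shows that the four relations do hold in $\Comp$ (for the last one, $00 \circ_1 01 = 010 = 01 \circ_2 01$, and similarly for the three others), so that $\phi$ factors through a still surjective $\bar{\phi} : \mathcal{F}/{\equiv} \to \Comp$. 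It then suffices to prove that $\# (\mathcal{F}/{\equiv})(n) \leq \# \Comp(n)$ for all $n \geq 1$, since equality will force $\bar{\phi}$ to be bijective, hence an isomorphism.

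By Proposition~\ref{prop:ElemComp}, the elements of $\Comp(n)$ are the words of length~$n$ over $\{0, 1\}$ beginning by~$0$, whence $\# \Comp(n) = 2^{n - 1}$. On the other side, $\mathcal{F}(n)$ is indexed by the planar binary trees with $n$ leaves whose $n - 1$ internal nodes are labelled by $\CompOpA$ or $\CompOpB$. I would orient each of the four relations from left to right as a rewriting rule on such labelled trees. The key observation is that every left-hand side is a node whose left child is an internal node, and every right-hand side is the corresponding node whose right child is an internal node: thus each rule is a \emph{right rotation} of the underlying tree together with a relabelling of the two nodes involved, and, since the four possible label pairs all occur as left-hand sides, a labelled tree is irreducible if and only if the left child of each internal node is a leaf, that is, if and only if it is a right comb. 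There are exactly $2^{n - 1}$ right combs with $n$ leaves.

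To turn this into the desired bound I would prove that the rewriting terminates. For a labelled tree $T$, set $\mu(T) := \sum_v \ell(v)$, where $v$ ranges over the internal nodes of $T$ and $\ell(v)$ is the number of leaves in the left subtree of~$v$. A single right rotation at a node whose internal left child carries $p \geq 1$ leaves decreases $\mu$ by exactly $p$, leaves every other summand unchanged, and does not depend on the labels; since $\mu$ is a nonnegative integer, the process halts. Consequently every labelled tree is congruent modulo~$\equiv$ to a right comb, each class of $\mathcal{F}/{\equiv}$ contains a right comb, and therefore $\# (\mathcal{F}/{\equiv})(n) \leq 2^{n - 1} = \# \Comp(n)$.

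Combining the two inequalities gives $\# (\mathcal{F}/{\equiv})(n) = 2^{n - 1}$ and shows that $\bar{\phi}$ is an isomorphism of non-symmetric operads, which is the claim. The crux is the upper bound, namely that the four relations are already enough to collapse $\mathcal{F}$ to the size of $\Comp$; this is exactly what the terminating rewriting to right combs provides, and I expect the termination estimate for $\mu$ to be the main point to get right, while the confluence of the system turns out to be unnecessary once $\# \Comp(n)$ is known. Should one want the stronger assertion that the four relations form a convergent quadratic presentation, the remaining task would be to verify that the critical pairs, all coming from the arity-$4$ trees made of three internal nodes along a left branch, are joinable, which is a finite and routine check.
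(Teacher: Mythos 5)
Your proof is correct and complete: the surjection from the free operad on $\CompOpA$, $\CompOpB$ onto $\Comp$, the verification that the four relations hold in $\T \EnsNat_2$, and the terminating left-to-right rewriting whose irreducible elements are the $2^{n-1}$ labelled right combs together force $\#(\mathcal{F}/{\equiv})(n) = \# \Comp(n) = 2^{n-1}$, and you rightly note that confluence is not needed once the target's cardinality is known. The paper, being an extended abstract, states this theorem without proof, so there is nothing to compare line by line, but your argument is the standard rewriting-plus-counting technique that such a presentation invites. One small imprecision: if the node being rotated has internal left child $u$ with left subtree $A$ and right subtree $B$, then $\mu$ drops by the number of leaves of $A$, not by the number of leaves carried by $u$ (which is $|A|+|B|$); since the drop is still at least $1$, termination, and hence your proof, is unaffected.
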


\subsection{Directed animals}

Let~$\AnD$ be the non-symmetric suboperad of~$\T \EnsNat_3$ generated
by~$00$ and~$01$. Since~$\FCat{1}$ is the non-symmetric suboperad of~$\T \EnsNat$
generated by~$00$ and~$01$, and since~$\T \EnsNat_3$ is a quotient of~$\T \EnsNat$,
$\AnD$ is a quotient of~$\FCat{1}$.

From now, we shall represent by~$-1$ the element~$2$ of~$\EnsNat_3$. With
this encoding, let
\begin{equation}
    \phi : \{-1, 0, 1\}^n \to \{-1, 0, 1\}^{n - 1},
\end{equation}
be the map defined for all $\La, \Lb \in \{-1, 0, -1\}$ by
\begin{equation}
    \phi(\La) := \epsilon \qquad \mbox{ and } \qquad
    \phi(\La \cdot \Lb \cdot u) := (\Lb - \La) \cdot \phi(u).
\end{equation}
For example, the element $x := 011220201$ of~$\AnD$ is represented by the
word $x' := 011-\!\!1-\!\!10-\!\!101$ and we have $\phi(x') = 10101-\!\!111$.

\begin{Proposition} \label{prop:BijAnDPrefMotz}
    By interpreting letters~$-1$ (resp.~$0$, $1$) as down (resp. stationary,
    up) steps, the map~$\phi$ induces a bijection between the elements
    of~$\AnD$ of arity~$n$ and the prefixes of Motzkin paths
    of length~$n - 1$.
\end{Proposition}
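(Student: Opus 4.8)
The plan is to show that $\phi$ restricts to a bijection from $\AnD(n)$ onto the set $P(n)$ of lattice paths with steps in $\{-1,0,1\}$ (read as down/stationary/up steps) of length $n-1$ all of whose partial-sum heights are nonnegative; this $P(n)$ is exactly the set of length-$(n-1)$ prefixes of Motzkin paths, since any nonnegative such path can be completed to a Motzkin path by appending down and stationary steps. I write $\overline{\,\cdot\,}$ for the representative in $\{-1,0,1\}$ of an element of $\EnsNat_3$, so that $\phi(x)$ is the step word with entries $d^x_i := \overline{x_{i+1} - x_i}$, and I set $H^x_k := d^x_1 + \dots + d^x_k$ for its heights. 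First I would record two elementary facts. Every element of $\AnD$ begins with the letter $0$: this holds for the generators $00$ and $01$ and for the unit $(0)$, and it is preserved by $\circ_i$, since the first letter of $x \circ_i y$ is $x_1$ when $i \geq 2$ and $x_1 \bullet y_1 = 0$ when $i = 1$ (using $x_1 = y_1 = 0$). Consequently $\phi$ is injective on $\AnD$, because from $\phi(x)$ together with $x_1 = 0$ one recovers $x$ by taking partial sums modulo $3$. It thus remains to identify the image $\phi(\AnD(n))$ with $P(n)$.

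For the inclusion $\phi(\AnD(n)) \subseteq P(n)$, I would argue that the set of words $x$ with $\phi(x) \in P$ is a suboperad containing $00$ and $01$, hence contains $\AnD$. The generators are immediate, and for closure under $\circ_i$ the key step is a direct computation of $\phi(x \circ_i y)$: using $y_1 = 0$, one finds that $\phi(x \circ_i y)$ equals $d^x_1 \cdots d^x_{i-1}$, followed by $\phi(y)$, followed by the single reduced step $\overline{d^x_i - y_m}$, followed by $d^x_{i+1} \cdots d^x_{n-1}$ (the last two blocks being absent when $i=n$, so composition at the last slot is plain concatenation). Tracking heights, the only delicate point is the height just after the reduced step, which I would show equals $H^x_i + \Delta$ with defect $\Delta := H^y_{m-1} + \overline{d^x_i - y_m} - d^x_i$. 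Since $y_m \equiv H^y_{m-1} \pmod 3$, one gets $\Delta \equiv 0 \pmod 3$, and since $H^y_{m-1} \geq 0$, $\overline{d^x_i - y_m} \geq -1$, and $d^x_i \leq 1$, one gets $\Delta \geq -2$; hence $\Delta \geq 0$. As the subsequent steps reproduce those of $x$ shifted up by $\Delta \geq 0$, and the heights inside the $\phi(y)$ block are $H^x_{i-1} + H^y_\ell \geq 0$, all heights stay nonnegative.

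For the reverse inclusion I would prove by induction on $n$ that every $p \in P(n)$ equals $\phi(x)$ for some $x \in \AnD(n)$. The base case $n=1$ is the unit. For $n \geq 2$ I reduce $p$ to a shorter nonnegative path, realized by some $x'$ through the induction hypothesis, and recover $x = x' \circ_j g$ for a generator $g$. Specializing the composition formula: composing with $00$ inserts a stationary step, whereas composing with $01$ at slot $j$ splits the $j$th step $d^x_j$ into an up step followed by $\overline{d^x_j - 1}$ (and at the last slot merely appends an up step). There are three exhaustive cases. If $p$ has a stationary step, delete it (deleting a stationary step always preserves nonnegativity), which undoes a $\circ_j 00$. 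If $p$ has no stationary step but ends with an up step, delete that final up step, undoing $\circ_{n-1} 01$. If $p$ has no stationary step but contains a down step, locate a peak, namely an up step immediately followed by a down step: such a peak must exist, since the path starts with an up step and the first down step is preceded only by up steps; merging this peak into a stationary step gives a nonnegative path, and splitting that stationary step via $\circ_j 01$ recovers $p$ (indeed $\overline{0-1}$ is a down step). Each operation lands in $P(n-1)$ and the formula confirms the recovered $x' \circ_j g$ has $\phi$-image $p$.

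The crux of the whole argument is the treatment of down steps. Because the only generators are $00$ and $01$, one can directly append or insert up and stationary steps but never a down step; down steps are forced to arise from the modular splitting $d^x_j \mapsto (\text{up},\ \overline{d^x_j - 1})$. This is precisely why the forward direction hinges on the arithmetic fact that the defect $\Delta$ is a nonnegative multiple of $3$ (a down step of large true size is, modulo $3$, replaced by a shift that can only raise later heights), and why the reverse direction must realize a down-ending path by merging a peak rather than by appending. I expect the real work to lie in verifying the composition formula and this $\Delta$-computation with care; once the formula is in place, the path manipulations for surjectivity are routine.
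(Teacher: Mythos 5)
Your proof is correct, and there is no proof in the paper to compare it against: this extended abstract states Proposition~\ref{prop:BijAnDPrefMotz} without proof, so your argument has to stand on its own --- and it does. Your reading of $\phi$ (consecutive differences reduced modulo $3$ into $\{-1,0,1\}$) matches the paper's example, and the three ingredients are each sound: the first-letter-$0$ invariant of $\AnD$, which yields injectivity of $\phi$ by recovering the letters as partial sums modulo $3$; the composition formula for $\phi(x \circ_i y)$ (a prefix of $\phi(x)$, then $\phi(y)$, then the reduced junction step, then the remainder of $\phi(x)$) together with the defect argument $\Delta \equiv 0 \pmod{3}$ and $\Delta \geq -2$, hence $\Delta \geq 0$, which shows $\phi$ maps $\AnD(n)$ into nonnegative paths; and the three-case induction (delete a stationary step, delete a terminal up step, or merge a peak into a stationary step) for surjectivity, whose cases are exhaustive and whose reductions preserve nonnegativity. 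One sentence does need tightening: ``the set of words $x$ with $\phi(x) \in P$ is a suboperad'' is false as literally stated, because closure under $\circ_i$ genuinely requires $y_1 = 0$; for instance $\phi(12) = (1)$ is a nonnegative path, yet $01 \circ_2 12 = 020$ and $\phi(020) = (-1,1)$ dips below the axis. The candidate set must be defined by the two conditions simultaneously, $x_1 = 0$ and $\phi(x)$ nonnegative. This costs nothing, since your closure computation already invokes $y_1 = 0$ and your first paragraph proves that every element of $\AnD$ begins with $0$ --- it is a wording fix, not a mathematical gap.
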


Recall that a \emph{directed animal} is a subset~$A$ of~$\EnsNat^2$ such
that $(0, 0) \in A$ and $(i, j) \in A$ with~$i \geq 1$ or~$j \geq 1$
implies $(i - 1, j) \in A$ or $(i, j - 1) \in A$. Using a bijection of
Gouyou-Beauchamps and Viennot~\cite{GBV88} between directed animals of
size~$n$ and prefixes of Motzkin paths of length~$n - 1$, one obtains,
by Proposition~\ref{prop:BijAnDPrefMotz}, a bijection between the elements
of~$\AnD$ of arity~$n$ and directed animals of size~$n$. Hence,~$\AnD$ is
a non-symmetric operad on directed animals.

\subsection{Segmented integer compositions}

Let~$\SComp$ be the non-symmetric suboperad of~$\T \EnsNat_3$ generated
by~$00$, $01$, and~$02$. Since~$\FCat{2}$ is the non-symmetric suboperad
of~$\T \EnsNat$ generated by~$00$, $01$, and~$02$, and since~$\T \EnsNat_3$
is a quotient of~$\T \EnsNat$, $\SComp$ is a quotient of~$\FCat{2}$. One
has the following characterization of the elements of~$\SComp$:
\begin{Proposition} \label{prop:ElemSComp}
    The elements of~$\SComp$ are exactly the words on the alphabet~$\{0, 1, 2\}$
    that begin by~$0$.
\end{Proposition}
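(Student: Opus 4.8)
The plan is to prove the two inclusions separately: every element of $\SComp$ is a word over $\{0,1,2\}$ beginning with $0$ (soundness), and conversely every such word belongs to $\SComp$ (completeness). The soundness direction is immediate. Since $\SComp \subseteq \T \EnsNat_3$, all its elements are words over $\{0,1,2\}$, so only the initial letter needs attention. The three generators $00$, $01$, $02$ and the unit $(0)$ all begin with $0$, and the property ``begins with $0$'' is preserved under the substitution maps: writing the product of $\EnsNat_3$ additively, if $x$ and $y$ both begin with $0$, then the first letter of $x \circ_i y$ equals $x_1 = 0$ when $i \geq 2$ and equals $x_1 + y_1 = 0$ when $i = 1$. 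An immediate induction on the number of substitutions used to build an element of $\SComp$ then gives the inclusion $\subseteq$.

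For completeness I would argue by induction on the length $n$ of the word $w = w_1 \dots w_n$, assuming $w_1 = 0$. The cases $n = 1$ (the unit $(0)$) and $n = 2$ (the generators $00$, $01$, $02$) are handled directly. The key computation is the effect of grafting a generator: for $a \in \{0,1,2\}$ and any $x \in \SComp$, one has
\begin{equation*}
    x \circ_i (0a) = (x_1, \dots, x_{i-1}, x_i, x_i + a, x_{i+1}, \dots),
\end{equation*}
where the addition is taken modulo $3$. In other words, substituting the generator $0a$ at position $i$ inserts immediately after $x_i$ a new letter equal to $x_i + a$. Since $a$ ranges over all of $\{0,1,2\}$ and the three generators $00$, $01$, $02$ are all available, this insertion can realize \emph{any} prescribed letter of $\{0,1,2\}$ at that spot.

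The induction step then follows: given $w = w_1 \dots w_n$ with $n \geq 3$ and $w_1 = 0$, I would consider $w' := w_1 \dots w_{n-1}$, which begins with $0$ and lies in $\SComp$ by the induction hypothesis. Choosing $a := (w_n - w_{n-1}) \bmod 3$ and the corresponding generator $0a \in \{00, 01, 02\}$, the computation above yields $w' \circ_{n-1} (0a) = w_1 \dots w_{n-1} w_n = w$, so $w \in \SComp$ because $\SComp$ is closed under substitution. The main obstacle is simply recognizing the correct rewriting, namely that each generator acts as a letter-insertion operation and that working modulo $3$ makes every target letter reachable from every source letter; once this observation is in hand the argument is a routine induction, entirely parallel to the proof of Proposition~\ref{prop:ElemComp} for $\Comp$ over $\EnsNat_2$.
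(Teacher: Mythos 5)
Your proof is correct. The paper is an extended abstract and gives no proof of this proposition (nor of the other characterizations in Section~4), so there is nothing to compare against; your argument is the natural one and it is sound. Both directions check out against the definition of the substitution maps: the first letter of $x \circ_i y$ is $x_1$ for $i \geq 2$ and $x_1 + y_1$ for $i = 1$, so ``begins with $0$'' is stable under composition; and the identity $x \circ_i (0a) = (x_1, \dots, x_{i-1}, x_i, x_i + a, x_{i+1}, \dots)$ shows that composing with a generator inserts the letter $x_i + a$ after $x_i$, which, since $a$ ranges over all of $\{0,1,2\}$ and arithmetic is modulo $3$, realizes any target letter --- this is precisely the point where $\SComp$ differs from $\FCat{2}$, where the same insertions over $\EnsNat$ can only produce letters between $0$ and $x_i + 2$. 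Your closing remark is also apt: the argument specializes to $\Comp$ over $\EnsNat_2$, and the count $3^{n-1}$ that the paper uses to link $\SComp$ to segmented compositions follows immediately from your characterization.
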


Recall that a \emph{segmented integer composition} of~$n$ is a sequence
$(S_1, \dots, S_\ell)$ of integers compositions such that~$S_1$ is an integer
composition of~$n_1$, \dots, $S_\ell$ is an integer composition of~$n_\ell$,
and $n_1 + \dots + n_\ell = n$. Proposition~\ref{prop:ElemSComp} implies
that we can regard the elements of arity~$n$ of~$\SComp$ as segmented integer
compositions of~$n$. Indeed, there is a bijection between words of~$\SComp$
and segmented compositions since there are~$3^{n - 1}$ segmented compositions
of~$n$ and also, by the above Proposition, $3^{n - 1}$ elements of~$\SComp$
of arity~$n$.

\subsection{The diassociative operad}

Let~$M$ be the submonoid of the multiplicative monoid restricted to the
set~$\{0, 1\}$. Let~$\DD$ be the non-symmetric suboperad~of $\T M$ generated
by~$01$ and~$10$. One has the following characterization of the elements of~$\DD$:
\begin{Proposition} \label{prop:ElemD}
    The elements of~$\DD$ are exactly the words on the alphabet~$\{0, 1\}$
    that contain exactly one~$1$.
\end{Proposition}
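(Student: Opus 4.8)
The plan is to prove the equality $\DD = S$, where $S$ denotes the set of words on $\{0,1\}$ containing exactly one letter~$1$, by establishing the two inclusions separately. Throughout I would use the two defining features of the monoid~$M$: the letter~$0$ is absorbing ($0 \bullet a = a \bullet 0 = 0$) and the letter~$1$ is the unit ($1 \bullet a = a \bullet 1 = a$). The words~$01$ and~$10$, as well as the operad unit $\Unite = (1)$, all belong to~$S$, so the content of the statement lies in checking that~$S$ is stable under substitution and, conversely, that every element of~$S$ is reachable from the generators.

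For the inclusion $\DD \subseteq S$, I would show that~$S$ is a suboperad of~$\T M$ containing the generators, for which it suffices to verify that~$S$ is stable under the maps~$\circ_i$. Let $x, y \in S$ and consider $x \circ_i y$, whose middle block is $x_i \bullet y_1, \dots, x_i \bullet y_m$. I distinguish two cases according to the value of~$x_i$. If $x_i = 0$, then the whole block collapses to zeros since~$0$ is absorbing, while the unique~$1$ of~$x$ (which sits at a position different from~$i$) survives untouched, so $x \circ_i y$ still has exactly one~$1$. If $x_i = 1$, then the block reproduces~$y$ verbatim since~$1$ is the unit, contributing the single~$1$ of~$y$, whereas the remaining letters of~$x$ carry no~$1$ (the only one was at position~$i$); again $x \circ_i y \in S$. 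As~$S$ contains~$\Unite$ and the generators~$01$ and~$10$, minimality of the generated suboperad yields $\DD \subseteq S$.

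The inclusion $S \subseteq \DD$ rests on the following explicit computation: since~$0$ is absorbing and~$1$ is the unit, one has $x \circ_i 01 = (x_1, \dots, x_{i-1}, 0, x_i, x_{i+1}, \dots, x_n)$ and $x \circ_i 10 = (x_1, \dots, x_{i-1}, x_i, 0, x_{i+1}, \dots, x_n)$. In other words, substituting~$01$ (resp.~$10$) at position~$i$ inserts a new letter~$0$ immediately to the left (resp. right) of the original~$i$th letter. Now an arbitrary element of~$S$ has the form $0^a 1 0^b$; starting from $\Unite = (1)$, I would apply $\circ_p 01$ at the current position~$p$ of the unique~$1$ exactly~$a$ times to grow the prefix of zeros, and then apply $\circ_p 10$ at the (now fixed) position~$p$ of the~$1$ exactly~$b$ times to grow the suffix of zeros, producing $0^a 1 0^b$ inside~$\DD$. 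Hence every word of~$S$ is obtained from the unit by substitutions with~$01$ and~$10$, giving $S \subseteq \DD$ and the claimed equality.

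The main obstacle I anticipate is getting the insertion mechanics exactly right, in particular tracking how the position of the unique~$1$ shifts (it increases by one at each prefix step but remains fixed throughout the suffix steps) and confirming that inserting zeros before and after it in the stated order indeed yields every $0^a 1 0^b$ without the successive zeros interfering. This is a bookkeeping point rather than a conceptual one, and it is settled directly by the two explicit formulas for $x \circ_i 01$ and $x \circ_i 10$ above.
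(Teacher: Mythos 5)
Your proof is correct. Note that the paper, being an extended abstract, states Proposition~\ref{prop:ElemD} without any proof, so there is nothing to compare against line by line; but your argument is the natural (and surely intended) one. Both halves check out: the closure of the set~$S$ of words with exactly one~$1$ under~$\circ_i$ follows exactly as you say from the case split on~$x_i$ (absorbing~$0$ versus unit~$1$), and your insertion formulas $x \circ_i 01 = (x_1, \dots, x_{i-1}, 0, x_i, \dots, x_n)$ and $x \circ_i 10 = (x_1, \dots, x_{i-1}, x_i, 0, \dots, x_n)$ are valid for any value of~$x_i$, so the construction of~$0^a 1 0^b$ from~$\Unite$ by growing first the prefix and then the suffix of zeros goes through; the bookkeeping of the position of the~$1$ that you flag as the main risk is handled correctly (it moves right once per prefix insertion and is fixed during suffix insertions).
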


Recall that the \emph{diassociative operad}~\cite{Lod01}~$\Dias$ is the
non-symmetric operad generated by two elements~$\dashv$ and~$\vdash$ of
arity~$2$, subject only to the relations \vspace{-.8em}

\begin{minipage}[c]{.4\linewidth}
    \begin{align}
        \dashv \circ_1 \dashv \enspace = \enspace \dashv \circ_2 \dashv &
            \enspace = \enspace \dashv \circ_2 \vdash, \\
        \vdash \circ_2 \vdash \enspace = \enspace \vdash \circ_1 \vdash &
            \enspace = \enspace \vdash \circ_1 \dashv,
    \end{align}
\end{minipage} \hfill
\begin{minipage}[c]{.4\linewidth}
    \begin{equation}
        \dashv \circ_1 \vdash \enspace = \enspace \vdash \circ_2 \dashv.
    \end{equation}
\end{minipage}

\begin{Proposition} \label{prop:IsoDiasD}
    The non-symmetric operads~$\DD$ and~$\Dias$ are isomorphic. The map
    $\phi : \Dias \to \DD$ defined by $\phi(\dashv) := 10$ and
    $\phi(\vdash) := 01$ is an isomorphism.
\end{Proposition}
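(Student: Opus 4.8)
The plan is to establish the isomorphism in three steps: well-definedness of $\phi$ as a non-symmetric operad morphism, surjectivity, and bijectivity through a count of cardinalities in each arity.

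Since $\Dias$ is the free non-symmetric operad on the two arity-$2$ generators $\dashv$ and $\vdash$ modulo the operadic ideal generated by the three displayed relations, specifying an operad morphism out of $\Dias$ amounts to fixing the images of the generators and checking that the two sides of each relation have equal images in~$\DD$. With $\phi(\dashv) := 10$ and $\phi(\vdash) := 01$, this is a direct computation using the substitution rule~(\ref{eq:TSub}) for the multiplicative product of $M = \{0, 1\}$. One finds $10 \circ_1 10 = 10 \circ_2 10 = 10 \circ_2 01 = 100$, which realizes the first relation; $01 \circ_2 01 = 01 \circ_1 01 = 01 \circ_1 10 = 001$, which realizes the second; and $10 \circ_1 01 = 01 \circ_2 10 = 010$, which realizes the third. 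Each computation only involves the rule $x_i \bullet y_j$ with multiplication in~$M$ and is routine; since all three relations are respected, $\phi$ descends to a well-defined morphism of non-symmetric operads.

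Surjectivity is immediate: by definition $\DD$ is the suboperad of $\T M$ generated by $01$ and $10$, which are exactly $\phi(\vdash)$ and $\phi(\dashv)$, so the image of $\phi$ is all of $\DD$. For bijectivity I would compare cardinalities arity by arity. By Proposition~\ref{prop:ElemD}, the elements of $\DD$ of arity $n$ are precisely the length-$n$ words over $\{0, 1\}$ containing a single~$1$, hence $|\DD(n)| = n$. On the other side, the diassociative operad satisfies $\dim \Dias(n) = n$ (Loday~\cite{Lod01}); concretely, the three relations let one rewrite every arity-$n$ monomial in $\dashv$ and $\vdash$ into a canonical form determined only by the position of the single surviving ``pointed'' variable, of which there are exactly~$n$. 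A surjection between finite sets of the same cardinality $n$ in each arity is a bijection, so $\phi$ is an isomorphism of non-symmetric operads.

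The delicate point is the bijectivity step, which hinges on knowing that $\dim \Dias(n) = n$. The cleanest route is to invoke this standard dimension of the diassociative operad. A self-contained alternative is to prove directly, from the three relations, that every arity-$n$ element of $\Dias$ reduces to one of $n$ normal forms indexed by the position of the pointed letter, giving $\dim \Dias(n) \leq n$; combined with the surjectivity of $\phi$ and the exact count $|\DD(n)| = n$, this upper bound forces equality and hence bijectivity.
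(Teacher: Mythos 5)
Your proof is correct; note that the paper, being an extended abstract, states Proposition~\ref{prop:IsoDiasD} with no proof at all, so there is nothing of the author's to compare against — what you give is the natural argument. Your three steps are all sound: the universal property of the presentation of $\Dias$ by generators and relations reduces well-definedness to the five equalities among images, and your computations in $\T M$ (with $M$ the multiplicative monoid on $\{0,1\}$) are right — indeed $10 \circ_1 10 = 10 \circ_2 10 = 10 \circ_2 01 = 100$, $01 \circ_2 01 = 01 \circ_1 01 = 01 \circ_1 10 = 001$, and $10 \circ_1 01 = 01 \circ_2 10 = 010$; surjectivity is immediate since the image of a morphism is a suboperad and $\DD$ is by definition the smallest suboperad of $\T M$ containing $01$ and $10$; and the arity-wise count $|\DD(n)| = n$ from Proposition~\ref{prop:ElemD} against $\dim \Dias(n) = n$ turns the surjection into a bijection. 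You correctly flag the only non-elementary input, namely the dimension of $\Dias(n)$: citing Loday~\cite{Lod01} is legitimate (the paper itself cites this source for $\Dias$), and your fallback — proving $\dim \Dias(n) \leq n$ by reducing every monomial to a normal form indexed by the position of the pointed variable, then letting surjectivity onto the $n$-element set $\DD(n)$ force equality — is exactly how one would make the argument self-contained; the normal forms with point at position $i$ map precisely to the words of $\DD(n)$ with the unique letter $1$ at position $i$, which makes the correspondence transparent.
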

Proposition~\ref{prop:IsoDiasD} also says that~$\DD$ is a realization
of the diassociative operad.

\bibliographystyle{plain}
\bibliography{Bibliographie}

\end{document}